\documentclass{amsart}
\usepackage[margin=1in]{geometry}

\usepackage{color,amssymb, comment, mathrsfs,tikz,upgreek,contour, soul, colonequals, mathdots,fancyvrb}
\usepackage[colorlinks, linkcolor={blue}]{hyperref}
\usepackage{tikz-cd}
\usepackage{cleveref}
\usetikzlibrary{cd}
\usetikzlibrary{fit,shapes.geometric}
\usetikzlibrary{matrix}
\usetikzlibrary{arrows}
\usepackage{enumitem}
\usepackage{mathtools}
\usepackage{thmtools}
\usepackage[all]{xy}
\usepackage[mathlines,pagewise]{lineno}
\allowdisplaybreaks

\newtheorem{theorem}{Theorem}[section]
\newtheorem*{maintheorem1}{Main Theorem 1}
\newtheorem*{maintheorem2}{Main Theorem 2}
\newtheorem{lemma}[theorem]{Lemma}
\newtheorem{proposition}[theorem]{Proposition}
\newtheorem{corollary}[theorem]{Corollary}
\theoremstyle{definition}
\newtheorem*{ack}{Acknowledgements}

\usepackage[only,llbracket,rrbracket,llparenthesis,rrparenthesis]{stmaryrd} 
\usepackage{accsupp}

\theoremstyle{definition}

\newtheorem{example}[theorem]{Example}

\theoremstyle{remark}
\newtheorem{remark}[theorem]{Remark}

\numberwithin{equation}{section}

\newcommand{\kk}{\Bbbk}

\newcommand{\Ext}{\operatorname{Ext}}

\newcommand{\Hom}{\operatorname{Hom}}

\newcommand{\RHom}{\operatorname{\mathbf{R}Hom}}

\newcommand{\Ass}{\operatorname{Ass}}

\newcommand{\HH}[2]{\operatorname{H}_{#1}(#2)}
\newcommand{\Tor}{\operatorname{Tor}}

\newcommand{\m}{\mathfrak{m}}

\newcommand{\depth}{\operatorname{depth}}

\newcommand{\ld}{\operatorname{lc.dim}}

\newcommand{\p}{\mathfrak{p}}
\newcommand{\Spec}{\operatorname{Spec}}

\renewcommand{\H}{\mathrm{H}}

\newcommand{\lotimes}{\otimes^{\mathbf{L}}}

\newcommand{\catb}{\sqsubset\mspace{-13mu}\sqsupset}
\newcommand{\catbb}{\sqsupset}
\newcommand{\catba}{\sqsubset}
\newcommand{\Catsub}[3]{{\mathsf{#2}}_{#3}(#1)}
\newcommand{\Catsupsub}[4]{{\mathsf{#2}}^{\text{\upshape #3}}_{#4}(#1)}
\newcommand{\Catsup}[3]{{\mathsf{#2}}^{\text{\upshape #3}}(#1)}
\newcommand{\D}{\textsf{\upshape D}}
\newcommand{\Db}[1][R]{\Catsub{#1}{D}{\catb}}
\newcommand{\Dfb}[1][R]{\Catsupsub{#1}{D}{f}{\catb}}
\newcommand{\Dbb}[1][R]{\Catsub{#1}{D}{\catbb}}
\newcommand{\Dba}[1][R]{\Catsub{#1}{D}{\catba}}
\newcommand{\Dfba}[1][R]{\Catsupsub{#1}{D}{f}{\catba}}
\newcommand{\Dfbb}[1][R]{\Catsupsub{#1}{D}{f}{\catbb}}
\newcommand{\Df}[1][R]{\Catsup{#1}{D}{f}}

\newcommand{\LLam}[2][\mathfrak{m}]{\nobreak{\mathbf{L}\Lambda^{#1}#2}}
\newcommand{\LL}[3][\mathfrak{m}]{\nobreak{\mathbf{L}\Lambda^{#1}_{#2}#3}}
\newcommand{\RGam}[2][\mathfrak{m}]{\nobreak{\mathbf{R}\Gamma_{#1}#2}}
\newcommand{\RG}[3][\mathfrak{m}]{\nobreak{\mathbf{R}\Gamma_{#1}^{#2}#3}}
\newcommand{\amp}{\operatorname{amp}}
\newcommand{\projdim}{\operatorname{projdim}}
\newcommand{\flatdim}{\operatorname{flatdim}}
\newcommand{\injdim}{\operatorname{injdim}}

\newcommand{\HR}{\HH{0}{R}}

\makeatletter
\DeclareFontEncoding{LS2}{}{\@noaccents}
\makeatother
\DeclareFontSubstitution{LS2}{stix}{m}{n}

\DeclareSymbolFont{largesymbolsstix}{LS2}{stixex}{m}{n}

\DeclareMathDelimiter{\lbrbrak}{\mathopen}{largesymbolsstix}{"EE}{largesymbolsstix}{"14}
\DeclareMathDelimiter{\rbrbrak}{\mathclose}{largesymbolsstix}{"EF}{largesymbolsstix}{"15}

\keywords{DG-ring; Improved New Intersection Theorem; amplitude inequality; depth; local cohomology dimension; Cohen-Macaulay DG-ring}

\subjclass[2020]{13D09, 13D22, 16E35, 16E45, 16E65}

\author[Ferraro]{Luigi Ferraro}
\address[Luigi Ferraro]{School of Mathematical and Statistical Sciences \\ University of Texas Rio Grande Valley\\ Edinburg, TX 78539, USA}
\email{luigi.ferraro@utrgv.edu}

\author[Nason]{Zachary Nason}
\address[Zachary Nason]{Department of Mathematics\\ University of Nebraska-Lincoln\\
Lincoln, NE 68503, USA}
\email{znason2@huskers.unl.edu}

\begin{document}
\title{Intersection Theorems over DG-Rings Revisited}

\begin{abstract}
In this work we generalize two recently proved intersection theorems for DG-rings. The Derived Improved New Intersection Theorem concerns the length of semi-free DG-modules over DG-rings and it was recently proved by the second author. We show that it holds under weaker hypotheses. Foxby's Intersection Theorem was generalized to DG-rings by Yang and we improve the inequality that they provided. As an application we prove a DG version of the classic result that finite length modules of finite projective dimension only exist over Cohen-Macaulay rings, generalizing another result of Yang.
\end{abstract}

\maketitle

\section{Introduction}
In classical commutative algebra, the Improved New Intersection Theorem is a central result concerning the length of a complex
\[
F: 0 \to F_n \to \ldots \to F_1 \to F_0 \to 0
\]
with each $F_i$ a finitely generated free module over a noetherian local ring ($R$, $\m$). In the theorem's original statement, it asserts that if the homologies $\HH{i}{F}$ are of finite length for $i > 0$, and if a minimal generator of $\HH{0}{F}$ generates a submodule of finite length, then $n \geq \dim R$. Various authors have generalized the original statement of the Improved New Intersection Theorem, culminating in work by the first author and Christensen \cite{IntThm}. They proved that for a complex $F$ as described above, if an ideal $I \subseteq R$ annihilates $\HH{i}{F}$ for $i > 0$ as well as a nonzero minimal generator of $\HH{0}{F}$, then $n \geq \dim R - \dim R/I$ \cite[Theorem 2.2]{IntThm}.

In this paper, we prove a generalization of \cite[Theorem 2.2]{IntThm} for nonnegative commutative noetherian local differential graded (DG) rings with bounded homology. (We will formally define this class of DG-rings in the next section.) Such DG-rings are natural generalizations of the noetherian local rings studied in classical commutative algebra, and for simplicity we will from now on refer to them as ``DG-rings''. Various theorems comparable to the classical homological conjectures/theorems have been recently proven in the DG-ring setting. For example, Shaul \cite[Theorem 5.22]{ShaulCM} has proven a DG-version of Bass' Theorem concerning Cohen-Macaulay DG-rings, and Yang \cite[Theorem 3.1]{yang} has recently shown that a DG-version of Foxby's Intersection Theorem for DG-rings holds. In \cite[Theorem 5.6]{Zach}, the second author established a DG-version of the original statement of the Improved New Intersection Theorem by using the existence of big Cohen-Macaulay modules over noetherian local rings. The main result of our paper weakens the hypotheses of \cite[Theorem 5.6]{Zach}.

\begin{maintheorem1}[\Cref{thm:dinit}]
Let $R$ be a DG-ring with constant amplitude. Let $I$ be an ideal of $\H_0(R)$, and $F\in\Dfb[R]$ with $\inf F=0$. If $\H_i(F)$ is $I$-torsion for $i>0$ and a minimal generator of $\H_0(F)$ is $I$-torsion, then
\[
\projdim_RF+\amp R\geq\dim\H_0(R)-\dim\H_0(R)/I.
\]
\end{maintheorem1}
In particular, we allow the homologies of the graded-free DG $R$-module $F$ to be $I$-torsion, instead of requiring them to have finite length as it was done in \cite[Theorem 5.6]{Zach}.

We also establish an improvement on Yang's DG-version of Foxby's Intersection Theorem \cite[Theorem 3.1]{yang}. Similar to what Foxby originally proved in \cite{foxflat}, Yang shows that for a DG-ring $R$, the following inequality holds:

\[
\flatdim_R X+\ld X\lotimes_RY \geq \ld_RY
\]
where $X$ and $Y$ are DG $R$-modules such that $X \in \Db[R]$ with $\depth_R X < \infty, \flatdim_RX<\infty$ and $Y \in \Dfbb[R]$. We replaced $\flatdim_RX$ in the above inequality with the smaller quantity $\depth R - \depth_R X$ and replaced $\ld_RX \lotimes_R Y$ with the smaller quantity $-\inf\RGam[\m]{(X\lotimes_R Y)}$:

\begin{maintheorem2}[\Cref{thm:foxby2}]
Let $X\in\Db[R]$ and $Y\in\Dfbb[R]$ with $Y\not\simeq0$, $\flatdim_R X<\infty$ and $\depth_R X<\infty$. Then,
\[
\depth R-\depth_R X-\inf\RGam[\m]{(X\lotimes_R Y)}\geq\ld_R Y.
\]
\end{maintheorem2}
We also construct an explicit example of a DG-module over a DG-ring of positive amplitude that shows that the above inequality is stronger than Yang's inequality.

This inequality has applications for the amplitude of the local cohomology of a derived torsion product of DG $R$-modules (\Cref{cor:ampRGam}), from which we deduce a DG version of the classic result that finite length modules of finite projective dimension only exist over Cohen-Macaulay rings (\Cref{cor:fl}), generalizing a recent result of Yang \cite[Corollary 4.4]{yang}. Namely, we show that the DG-ring $R$ is Cohen-Macaulay if and only if there exists a bounded derived $\m$-torsion DG $R$-module $X$ of finite flat dimension with the same amplitude as $R$. 

We now outline the structure of this work. In \Cref{prelim} we provide the necessary background information. In \Cref{sec:dinit} we prove our strengthening of the Derived Improved New Intersection Theorem. In \Cref{sec:foxby} we prove our DG-version of Foxby's intersection theorem, generalizing \cite[Theorem 3.1]{yang}, and we provide some applications.
 
\begin{ack}
Luigi Ferraro was partly supported by the Simons Foundation grant MPS-TSM-00007849.  
\end{ack}

\section{Preliminaries}\label{prelim}

\subsection*{DG-Rings} A differential graded ring (DG-ring) is a graded algebra $R = \bigoplus_{i \in \mathbb{Z}} R_i$ equipped with a differential $\delta_i: R_i \to R_{i-1}$ such that $\delta_{i-1}\circ\delta_i = 0$ for all $i \in \mathbb{Z}$ and such that the differential satisfies the Leibniz rule
\begin{equation*}
\delta(rs) = \delta(r)s + (-1)^{|r|}r\delta(s)
\end{equation*}
where $|r|$ denotes the index such that $r \in R_{|r|}$. A DG-ring is commutative if for all elements $r, s \in R$, we have that $rs = (-1)^{|r||s|}sr$ and if for all elements $r \in R$ with $|r|$ odd, we have $r^2 = 0$.

The main object of study in this article is a noetherian local DG-ring. This is a commutative DG-ring that satisfies the four conditions below:
\begin{enumerate}
\item $R$ is non-negative ($R_i = 0$ for all $i < 0$)
\item $R$ is homologically bounded ($\HH{i}{R} = 0$ for $i \gg 0$.)
\item $\HR$ is a noetherian local ring with maximal ideal $\m$ and residue field $\kk$
\item $\HH{i}{R}$ is a finitely generated $\HR$-module for all $i \in \mathbb{Z}$ 
\end{enumerate}
Throughout the rest of the paper, we will refer to these objects as ``DG-rings'' for simplicity.

\subsection*{The Derived Category} A DG-module $M$ over a DG-ring $R$ is a graded module $M = \bigoplus_{i \in \mathbb{Z}}M_i$ equipped with a differential $\delta_i: M_i \to M_{i-1}$ such that $\delta_{i-1}\circ\delta_i = 0$ for all $i \in \mathbb{Z}$ and such that the differential satisfies the Leibniz rule
\begin{equation*}
\delta(rm) = \delta(r)m + (-1)^{|r|}r\delta(m)
\end{equation*}
where $r \in R$ and $m \in M$. The set of DG-modules and the $R$-linear chain maps between DG-modules form an abelian category denoted by $\textsf{DG}(R)$. By formally inverting the quasi-isomorphisms in this category, we obtain the derived category of $R$, denoted by $\D(R)$. If $M\in\D(R)$ then we set
\[
\inf X=\inf\{i\mid \H_i(M)\neq0\},\quad\sup M=\sup\{i\mid \H_i(M)\neq0\},
\]
and we set $\amp M=\sup M-\inf M$. Furthermore, we define
\[
\Dba[R]=\{M\in \D(R)\mid \sup M<\infty\},\quad \Dbb[R]=\{M\in\D(R)\mid \inf M>-\infty\},
\]
and $\Db[R]=\Dba[R]\cap\Dbb[R]$. Moreover, we say that $M\in\Df[R]$ if $\H_i(M)$ is a finitely generated $\HR$-module for all $i$. We also set
\begin{itemize}
\item $\Dfba[R]=\Df[R]\cap\Dba[R], $
\item $\Dfbb[R]=\Df[R]\cap\Dbb[R],$
\item $\Dfb[R]=\Df[R]\cap\Db[R].$
\end{itemize}
We denote an isomorphism in $\D(R)$ by $\simeq$.

\subsection*{Homological Dimensions}
For $M \in \D(R)$, the projective, injective, and flat dimensions of $M$ are defined similarly to the classical homological dimensions in commutative algebra. The projective dimension of $M$ is defined to be
\begin{equation*}
	\projdim_RM = \inf\{n \in \mathbb{Z} \, | \, \Ext^i_R(M, N) = 0 \text{ for any $N \in \Db[R]$ and any $i > n - \inf N$}\},
\end{equation*}
the injective dimension of $M$ is defined to be
\begin{equation*}
    \injdim_RM = \inf\{n \in \mathbb{Z} \, | \, \Ext_i^R(N, M) = 0 \text{ for any $N \in \Db[R]$ and any $i > n + \sup N$}\},
\end{equation*}
and the flat dimension of $M$ is defined to be
\begin{equation*}
	\flatdim_R M = \inf\{n \in \mathbb{Z} \, | \, \Tor_i^R(N, M) = 0 \text{ for any $N \in \Db[R]$ and any $i > n + \sup N$}\}.
\end{equation*}

\subsection*{Depth and local cohomology Krull dimension} If $X\in\Dbb[R]$, then its \emph{local cohomology Krull dimension} is
\[
\ld_RX=\sup_{l\in\mathbb{Z}}\{\dim_{\HR}\H_l(X)-l\},
\]
see \cite[Definition 2.1]{ShaulCM}.

If $X\in\Dba[R]$, then its \emph{depth} is
\[
\depth_RX=-\sup\RHom_R(\kk,X),
\]
see \cite[Definition 3.1]{ShaulCM}. If $I$ is an ideal of $\H_0(R)$, we define the \emph{$I$-depth} of $X$ as
\[
\depth_R(I,X)=-\sup\RHom_R(\H_0(R)/I,X),
\]
see \cite[Definition 3.1]{ShaulKoszul}.

\subsection*{Local (co)homology} Let $I$ be an ideal of $\HR$. Let $J$ be a finitely generated ideal in $R_0$ whose image in $\HR$ is $I$ and let $\mathbf{j}$ be a finite generating set of $J$. We denote by $\check{C}(\mathbf{j})$ the \v{C}ech complex on the sequence $\mathbf{j}$, see \cite[(4.13)]{ShaulYeku}, where it is called the \emph{infinite dual Koszul complex} on $\mathbf{j}$. If $X\in\D(R)$ we define the \emph{derived $I$-torsion} and the \emph{derived $I$-completion} of $X$ as
\[
\RG[\m]{R}{X}=X\lotimes_{R_0}\check{C}(\mathbf{j}),\quad\LL[\m]{R}{X}=\RHom_{R_0}(\check{C}(\mathbf{j}),X)
\]
see \cite[Lemma 5.7]{ShaulYeku} and \cite[Corollary 2.13]{ShaulCompletion}. We omit the dependency on the ring if it is clear from the context.

We say that $X$ is \emph{derived $I$-torsion} if $\RGam[\m]{X}\simeq X$, and we say that it is \emph{derived $I$-complete} if $\LLam[\m]{X}\simeq X$.

\section{The Derived Improved New Intersection Theorem}\label{sec:dinit}

Over commutative noetherian local rings the New Intersection Theorem, originally proved by Peskine and Szpiro \cite{PeskineSzpiro2}, is concerned with the length of finite free complexes. The Improved New Intersection Theorem first appeared in the proof of Evans and Griffith's Syzygy Theorem \cite{EvansGriffith}. Two stronger versions were later proved by Iyengar \cite[Theorem 3.1]{Sri} and by Avramov, Iyengar and Neeman \cite[Theorem 4.2]{AIN}. More recently, the first author and Christensen proved a version of the Improved New Intersection Theorem \cite[Theorem 2.2]{IntThm} which subsumes both Iyengar's version and the version of Avramov, Iyengar and Neeman.

In a recent paper, the second author generalized Iyengar's intersection theorem to DG-rings, see \cite[Theorem 5.6]{Zach}, labeling it the Derived Improved New Intersection Theorem. The main result of this section, \Cref{thm:dinit}, is a DG version of the intersection theorem proved by the first author and Christensen and shows that the Derived Improved New Intersection Theorem holds under weaker hypotheses.

We first prove a series of five preliminary lemmas which are DG versions of \cite[Proposition 13.3.19, Theorem 14.4.3]{LarsBook}, \cite[Lemma 1.3(b)]{IntThm}, \cite[Proposition 2.10]{FoxbyIyengar} and \cite[(2.2)]{MCMSurvey}.

\begin{lemma}\label{lem:RGamTensor}
Let $R$ be a DG-ring and let $I$ be an ideal of $\H_0(R)$. Let $M,N\in\D(R)$, then
\[
(\RGam[I]{M})\lotimes_RN\simeq \RGam[I]{(M\lotimes_RN)}\simeq M\lotimes_R(\RGam[I]{N}).
\]

\end{lemma}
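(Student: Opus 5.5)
We use the definition $\RGam[I]{X}=X\lotimes_{R_0}\check{C}(\mathbf{j})$, where $J\subseteq R_0$ is a finitely generated ideal lifting $I$ and $\mathbf{j}$ is a finite generating set of $J$. The idea is that derived $I$-torsion is just tensoring with a fixed complex over $R_0$, and such tensor products commute with $-\lotimes_R N$ by associativity.

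Because $R_0\to R$ is a map of commutative DG-rings, $R$ is a DG $R_0$-algebra. The \v{C}ech complex $\check{C}(\mathbf{j})$ is a bounded complex of flat $R_0$-modules, namely localizations $(R_0)_{j_{i_1}\cdots j_{i_k}}$. Hence for any $X\in\D(R)$ the derived tensor $X\lotimes_{R_0}\check{C}(\mathbf{j})$ is computed by the honest tensor product $X\otimes_{R_0}\check{C}(\mathbf{j})$. No K-flat replacement of $X$ over $R_0$ is needed, since $\check{C}(\mathbf{j})$ is a bounded complex of flat modules and therefore K-flat. This product carries a natural DG $R$-module structure through the $X$ factor. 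We can also write it as
\[
X\otimes_{R_0}\check{C}(\mathbf{j})\cong X\otimes_R\bigl(R\otimes_{R_0}\check{C}(\mathbf{j})\bigr).
\]
Set $C:=R\otimes_{R_0}\check{C}(\mathbf{j})$. This is a DG $R$-module that is K-flat over $R$, because it is a bounded complex built from the flat $R$-modules $R\otimes_{R_0}(R_0)_{j}$, i.e.\ localizations of $R$. Therefore
\[
\RGam[I]{X}\simeq X\lotimes_R C,
\]
naturally in $X\in\D(R)$.

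With this description, let $P\to M$ be a K-projective (semi-free) resolution over $R$. Then:
\begin{itemize}
\item $(\RGam[I]{M})\lotimes_RN\simeq (P\otimes_R C)\otimes_R N$;
\item $\RGam[I]{(M\lotimes_RN)}\simeq (P\otimes_RN)\otimes_R C$.
\end{itemize}
These two are isomorphic through the associativity and graded commutativity isomorphisms of $\otimes_R$, with the Koszul sign rule. By symmetry, taking a semi-free resolution of $N$ instead, the same argument identifies $M\lotimes_R(\RGam[I]{N})$ with $M\lotimes_RN\lotimes_RC$.

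The only delicate step is the justification that $C$ is K-flat over $R$, so that $-\otimes_R C$ computes $-\lotimes_R C$, and that the identification $\RGam[I]{X}\simeq X\lotimes_RC$ is independent of choices. This follows because:
\begin{itemize}
\item a bounded complex of flat DG-modules is K-flat;
\item each $R\otimes_{R_0}(R_0)_j\cong R_j$ is flat over $R$.
\end{itemize}
Independence of the lift $J$ and of the generators $\mathbf{j}$ is already part of the cited results defining $\RGam[I]{}$ (\cite[Lemma 5.7]{ShaulYeku}), so it is not needed here. Alternatively, one can cite the DG adjunction and tensor-evaluation isomorphisms in \cite{ShaulYeku} directly.
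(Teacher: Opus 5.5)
Your argument is correct and takes essentially the paper's route: both identify $\RGam[I]{X}$ with $X\lotimes_{R_0}\check{C}(\mathbf{j})$ and reduce the claim to associativity and commutativity of derived tensor products. The paper cites \cite[Proposition 12.10.9]{Yeku} for the associativity across $R_0$ and $R$, whereas you check it at the chain level using the K-flat DG $R$-module $C=R\otimes_{R_0}\check{C}(\mathbf{j})$; its K-flatness follows most directly from $X\otimes_RC\cong X\otimes_{R_0}\check{C}(\mathbf{j})$.
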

\begin{proof}
Let $J$ be a finitely generated ideal in $R_0$ whose image in $\H_0(R)$ is $I$. Let $\mathbf{j}$ be a finite generating set of $J$. Let $\check{C}(\mathbf{j})$ be the \v{C}ech complex on the sequence $\mathbf{j}$. Then by \cite[Proposition 2.4]{ShaulCompletion} and \cite[Proposition 12.10.9]{Yeku}
\[
\RGam[I]{M}\lotimes_RN\simeq (M\lotimes_{R_0}\check{C}(\mathbf{j}))\lotimes_RN\simeq (M\lotimes_RN)\lotimes_{R_0}\check{C}(\mathbf{j})\simeq \RGam[I]{(M\lotimes_RN)}.\qedhere
\]
\end{proof}

The following Lemma generalizes \cite[Proposition 3.3]{ShaulCM}.
\begin{lemma}\label{lem:Idepth}
Let $R$ be a DG-ring and $I$ an ideal of $\H_0(R)$. Let $N\in\Dba[R]$ be nonzero, then
\[
\depth_R(I,N)=-\sup\RGam[I]{N}.
\]
\end{lemma}
\begin{proof}
The assertion follows from the following string of equalities where $K(R;x_1,\ldots,x_n)$ denotes the Koszul complex over a generating set of $I$ as defined in \cite[Definition 2.4]{ShaulKoszul}
\begin{align*}
\depth_R(I,N)&=n-\sup (K(R;x_1,\ldots,x_n)\lotimes_RN)\\
&=n-\sup (\RGam[I]K(R;x_1,\ldots,x_n)\lotimes_RN)\\
&=n-\sup (K(R;x_1,\ldots,x_n)\lotimes_R\RGam[I](N))\\
&=\depth_R(I,\RGam[I]{N})\\
&=-\sup\RGam[I]{N},
\end{align*}
where the first equality follows from \cite[Proposition 3.17]{ShaulKoszul}, and the second equality from \cite[Proposition 2.9]{ShaulCompletion} and the fact that $K(R;x_1, \ldots, x_n)$ is derived $I$-torsion by construction. The third equality from \Cref{lem:RGamTensor}, the fourth is again \cite[Proposition 3.17]{ShaulKoszul}, and the last one from \cite[Lemma 3.1]{Zach}.
\end{proof}

\begin{lemma}\label{lem:supRHom2}
Let $R$ be a DG-ring and $I$ an ideal of $\H_0(R)$. Let $M\in\Dbb[R]$ and $N\in\Dba[R]$ be DG $R$-modules such that $M$ is derived $I$-torsion with $\H(M)$ nonzero. Then
\[
-\sup\RHom_R(M,N)\geq\inf M+\depth_R(I,N).
\]
\end{lemma}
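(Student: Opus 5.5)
Since $M$ is derived $I$-torsion, we have $M\simeq\RGam[I]{M}$, and the plan is to move the torsion functor onto $N$. By adjunction, $\RHom_R(\RGam[I]{M},N)\simeq\RHom_R(M,\RGam[I]{N})$ (Greenlees--May/MGM duality in the DG setting, as in \cite{ShaulYeku,ShaulCompletion}; alternatively $\RHom_R(\RGam[I]{M},N)\simeq\RHom_R(M,\LLam[I]{N})$ combined with $\RHom_R(M,\RGam[I]{N})\simeq \RHom_R(M,N)$ for torsion $M$). Then
\[
\RHom_R(M,N)\simeq\RHom_R(M,\RGam[I]{N}).
\]
This reduces the claim to a standard amplitude estimate for $\RHom$ with source bounded below and target bounded above. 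By \Cref{lem:Idepth}, the target satisfies $\sup\RGam[I]{N}=-\depth_R(I,N)$. If $N\simeq 0$ or $\RGam[I]{N}\simeq0$, the depth is $\infty$ and the left side is $+\infty$ (since $\RHom$ vanishes), so the inequality holds trivially. We may therefore assume $\RGam[I]{N}\in\Dba[R]$ is nonzero, with finite $\sup$.

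Next we use the general bound: for $X\in\Dbb[R]$ and $Y\in\Dba[R]$,
\[
\sup\RHom_R(X,Y)\le \sup Y-\inf X.
\]
This follows by replacing $X$ with a semi-free resolution built from cells in degrees $\ge\inf X$ (available over nonnegative DG-rings, e.g.\ by \cite{Yeku}), and $Y$ with a truncation concentrated in degrees $\le \sup Y$ (smart truncation is legitimate since $R$ is nonnegative). Then $\Hom_R(X,Y)$ vanishes in degrees $>\sup Y-\inf X$. Equivalently, it follows from the way-out/Ext vanishing built into the definitions of \Cref{prelim}. Applying this with $X=M$ and $Y=\RGam[I]{N}$ gives
\[
-\sup\RHom_R(M,N)\ge \inf M-\sup\RGam[I]{N}=\inf M+\depth_R(I,N).
\]

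The main obstacle is justifying the first step, namely the adjunction $\RHom_R(M,N)\simeq\RHom_R(M,\RGam[I]{N})$ for derived $I$-torsion $M$ over a DG-ring, where $\RGam[I]{}$ is defined via $-\lotimes_{R_0}\check C(\mathbf j)$. I would first try to cite the DG Greenlees--May duality of Shaul, namely the statement that $\RGam[I]{}$ is right adjoint to the inclusion of derived torsion modules. If that citation is unavailable, I would argue directly. The cone $C$ of $\RGam[I]{N}\to N$ is $N\lotimes_{R_0}(\check C(\mathbf j)\to R_0)$, which is built from localizations $N_{j}$. Then $\RHom_R(M,C)=0$, because $M\simeq M\lotimes_{R_0}\check C(\mathbf j)$ and $\check C(\mathbf j)\lotimes_{R_0} R_0[j^{-1}]\simeq0$. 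With that in hand, $\RHom_R(M,-)$ applied to the triangle $\RGam[I]{N}\to N\to C$ yields the desired isomorphism.
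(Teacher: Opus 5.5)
Your proposal is correct and follows essentially the same route as the paper. Both arguments use derived Greenlees--May duality to obtain $\RHom_R(M,N)\simeq\RHom_R(M,\RGam[I]{N})$ for derived $I$-torsion $M$, then apply the bound $\sup\RHom_R(X,Y)\le\sup Y-\inf X$ (which the paper simply cites as \cite[Lemma 3.2(i)]{ShaulFinDim}) together with \Cref{lem:Idepth}. Your extra justifications are sound: the cone argument, the semi-free resolution plus truncation argument, and the treatment of the degenerate case $\RGam[I]{N}\simeq0$. One correction: the general form of the duality is $\RHom_R(\RGam[I]{M},N)\simeq\RHom_R(\RGam[I]{M},\RGam[I]{N})$, and your displayed version with plain $M$ on the right holds here only because $M$ is derived $I$-torsion.
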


\begin{proof}
We have that the following chain of isomorphisms in the derived category holds:

\begin{align*}
    \RHom_R(M, N)   &\simeq \RHom_R(\RGam[I]{M}, N) \\
                    &\simeq \RHom_R(\RGam[I]{M}, \RGam[I]{N}) \\
                    &\simeq \RHom_R(M, \RGam[I]{N}).
\end{align*}

The first isomorphism and third isomorphisms hold as $M$ is derived $I$-torsion, and the second isomorphism holds by derived Greenlees-May duality \cite[Proposition 2.11]{ShaulCompletion}. This gives the first equality below:

\begin{align*}
    -\sup\RHom_R(M, N))   &= -\sup\RHom_R(M, \RGam[I]{N})\\
                            &\geq \inf M - \sup\RGam[I]{N} \\
                            &= \inf M + \depth_R(I, N).
\end{align*}
where the inequality comes from \cite[Lemma 3.2(i)]{ShaulFinDim} and the last equality from \Cref{lem:Idepth}.
\end{proof}

Below we use the following notation: if $I$ is an ideal of $\HR$ then
\[
\mathrm{V}(I)=\{\p\in\Spec(\HR)\mid I\subseteq\p\}.
\]

\begin{lemma}\label{lem:depthLocal}
Let $R$ be a DG-ring and $M$ a DG $R$-module in $\Dba[R]$. If $I$ is an ideal of $\H_0(R)$, then
\[
\depth_R(I,M)=\inf\{\depth_{R_\p}M_\p\mid\p\in\mathrm{V}(I)\}.
\]
\end{lemma}

\begin{proof}
It follows from derived tensor-hom adjunction that
\[
\RHom_{\H_0(R)}(\H_0(R)/I,\RHom_R(\H_0(R),M))\simeq\RHom_R(\H_0(R)/I,M).
\]
Therefore the second equality below follows
\begin{align*}
\depth_R(I,M)&=-\sup\RHom_R(\H_0(R)/I,M)\\
&=-\sup\RHom_{\H_0(R)}(\H_0(R)/I,\RHom_R(\H_0(R),M)).
\end{align*}
Therefore
\begin{align*}
\depth_R(I,M)&=\depth_{\H_0(R)}(I,\RHom_R(\H_0(R),M))\\
&=\inf\{\depth_{\H_0(R)_\p}\RHom_R(\H_0(R),M)_\p\mid\p\in\mathrm{V}(I)\}\\
&=\inf\{\depth_{R_\p}M_\p\mid\p\in\mathrm{V}(I)\},
\end{align*}
where the second equality follows from \cite[Proposition 2.10]{FoxbyIyengar}.
\end{proof}

\begin{lemma}\label{lem:dCompleteFree}
Let $R$ be a DG-ring with $I$ an ideal in $\H_0(R)$, and let $M\in \D(R)$ and  $F\in\Dfb$ with $\projdim_RF < \infty$. Then
\[
\LLam[I]{(M \lotimes_R F)} \simeq \LLam[I]{(M)} \lotimes_R F
\]
\end{lemma}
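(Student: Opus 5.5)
Using the definition $\LLam[I]{X}=\RHom_{R_0}(\check{C}(\mathbf{j}),X)$, where $\mathbf{j}$ is a finite generating set of a finitely generated ideal $J\subseteq R_0$ lifting $I$, the plan is to reduce everything to the standard evaluation morphism
\[
\theta\colon \RHom_{R_0}(\check{C}(\mathbf{j}),M)\lotimes_R F\longrightarrow \RHom_{R_0}(\check{C}(\mathbf{j}),M\lotimes_R F),
\]
and to show that $\theta$ is an isomorphism when $F$ is compact. This morphism is natural in $F$ and compatible with shifts and distinguished triangles in the $F$ variable. It is also clearly an isomorphism for $F=R$, since both sides are then $\LLam[I]{M}$. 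Concretely, $\theta$ comes from the $R$-linear structure of $\RHom_{R_0}(\check{C}(\mathbf{j}),-)$, using the same identification of $R_0$- and $R$-structures that was used in \Cref{lem:RGamTensor} via \cite[Proposition 12.10.9]{Yeku}.

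The first step is to check that the full subcategory of $F\in\D(R)$ for which $\theta$ is an isomorphism is thick. It is closed under shifts, cones (by the five lemma on the long exact homology sequences) and direct summands. Since it contains $R$, it contains every compact DG $R$-module, that is, the thick subcategory generated by $R$.

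The second step, which I expect to be the main obstacle, is to show that $F\in\Dfb[R]$ with $\projdim_RF<\infty$ is compact in $\D(R)$. For a nonpositive noetherian DG-ring with bounded homology I would invoke the known characterization: a DG-module in $\Dfb[R]$ (or even $\Df[R]$ bounded below) of finite projective dimension admits a semi-free resolution with finitely many finitely generated semi-basis elements, hence lies in $\mathrm{thick}(R)$. This is available in Shaul's and Yekutieli's work, e.g.\ the finite-dimensional results cited as \cite{ShaulFinDim}. If that result is not directly citable, I would argue by induction on $\projdim_RF-\inf F$. Take a minimal semi-free resolution built from a free $\H_0(R)$-cover of $\H_{\inf F}(F)$, with finite rank because $F\in\Df$. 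Its cone $F'$ again lies in $\Dfb$, and I would argue that it has strictly smaller value of $\projdim-\inf$. Once this quantity becomes negative, $F'$ is zero, and $F$ is built from finitely many shifts of finite free modules.

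Combining the two steps, $\theta$ is an isomorphism for our $F$, which gives the claimed
\[
\LLam[I]{(M\lotimes_R F)}\simeq \LLam[I]{(M)}\lotimes_R F.
\]
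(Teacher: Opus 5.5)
Your proposal is correct and is essentially the paper's argument. Once $\LLam[I]{X}=\RHom_{R_0}(\check{C}(\mathbf{j}),X)$ is rewritten by adjunction as $\RHom_R(\check{C}(\mathbf{j})\lotimes_{R_0}R,X)$, the whole lemma reduces to the tensor-evaluation isomorphism. The paper simply cites this as \cite[Lemma 2.7(ii)]{ShaulFinDim}, whereas you prove it yourself via the thick-subcategory argument plus compactness of $F$, which is a valid self-contained justification.

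One caution about your fallback induction: the strict drop of $\projdim-\inf$ genuinely needs the minimality of the cover, i.e. $\projdim_RF=\sup(\kk\lotimes_RF)$ for $F\in\Dfb[R]$. The triangle alone only gives $\projdim_RF'\le\max(\projdim_RF,\inf F+1)$, which does not decrease when $\projdim_RF=\inf F$.
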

\begin{proof}
Let $J$ be a finitely generated ideal in $R_0$ that is a lift of $I$ (that is, let $J$ be the preimage of $I$ under the quotient map $R_0 \to \H_0(R)$), and denote a finite list of generators of $J$ as $\mathbf{j}$. We have an isomorphism $\LLam[I]{(M \lotimes_R F)} \simeq \RHom_{R_0}(\check{C}(\mathbf{j}), M \lotimes_R F)$ in $\D(R)$ by \cite[Lemma 2.4]{ShaulCompletion} The DG $R$-module $\RHom_{R_0}(\check{C}(\mathbf{j}), M \lotimes_R F)$ is isomorphic to $\RHom_R(\check{C}(\mathbf{j}) \lotimes_{R_0} R, M \lotimes_R F)$ through derived Hom-tensor adjunction. Thus, we have the isomorphisms

\begin{align*}
\LLam[I]{(M \lotimes_R F)}  &\simeq \RHom_R(\check{C}(\mathbf{j}) \lotimes_{R_0} R, M \lotimes_R F) \\
                            &\simeq \RHom_R(\check{C}(\mathbf{j}) \lotimes_{R_0} R, M) \lotimes_R F \\
                            &\simeq \LLam[I]{M} \lotimes_R F
\end{align*}
where the second isomorphism comes from \cite[Lemma 2.7(ii)]{ShaulFinDim}, and the third isomorphism comes from \cite[Lemma 2.4]{ShaulCompletion}.
\end{proof}

The following Theorem improves the bound given in \cite[Lemma 3.7]{Zach}.
\begin{theorem}\label{thm:depthIneq}
Let $R$ be a DG-ring and $M$ a derived $\m$-complete complex. For every prime ideal $\p$ in $\H_0(R)$ the following inequality holds
\[
\depth_RM\leq\depth_{R_\p}M_\p+\dim\H_0(R)/\p.
\]
\end{theorem}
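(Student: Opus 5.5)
The plan is to compare $\m$-depth with $\p$-depth using \Cref{lem:Idepth} and \Cref{lem:depthLocal}, and to pass from $\p$ to $\m$ by adjoining $d=\dim\H_0(R)/\p$ elements, losing at most one in depth per element. I take $M\in\Dba[R]$ nonzero, since otherwise depth is not defined or the inequality is trivial. I expect the derived $\m$-completeness hypothesis not to be needed in this argument; at worst it ensures $\depth_RM<\infty$.

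\emph{Step 1.} By \Cref{lem:depthLocal} applied with $I=\p$,
\[
\depth_R(\p,M)=\inf\{\depth_{R_\mathfrak{q}}M_\mathfrak{q}\mid \mathfrak{q}\in\mathrm{V}(\p)\}\leq\depth_{R_\p}M_\p .
\]
So it suffices to show
\[
\depth_RM=\depth_R(\m,M)\leq\depth_R(\p,M)+d .
\]

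\emph{Step 2.} Since $\H_0(R)/\p$ is a local ring of dimension $d$, it has a system of parameters. Lifting it gives elements $x_1,\dots,x_d\in\m$ with $\m=\sqrt{\p+(x_1,\dots,x_d)}$. Depth depends only on the radical, because $\RGam[I]{}$ does. Hence it is enough to prove the following one-step claim and iterate it $d$ times:
\[
\depth_R(I+(x),M)\leq\depth_R(I,M)+1 \quad\text{for any ideal } I \text{ and element } x .
\]

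\emph{Step 3 (main step).} By \Cref{lem:Idepth}, $\depth_R(J,M)=-\sup\RGam[J]{M}$ whenever the relevant objects are nonzero. I would use the composition formula $\RGam[I+(x)]{M}\simeq\RGam[(x)]{\RGam[I]{M}}$. This holds because the \v{C}ech complex on the concatenated sequence is the tensor product of the two \v{C}ech complexes, as in the definition via $\check{C}(\mathbf{j})$ over $R_0$. The \v{C}ech complex on a single element $x$ (lifted to $R_0$) is $R_0\to (R_0)_x$. Therefore, for $N=\RGam[I]{M}$ there is an exact triangle
\[
\RGam[(x)]{N}\to N\to N_x\to ,
\]
and the long exact homology sequence gives $\sup\RGam[(x)]{N}\leq\sup N+1$. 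This yields the claim, with one caveat. If $\RGam[I]{M}\simeq0$, then $\depth_R(I,M)=\infty$ and the inequality is trivial.

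I expect the main obstacle to be making the composition formula and the triangle rigorous in the DG setting, that is, checking the homological bounds and that everything is compatible with $\lotimes_{R_0}$. I would handle this via \cite[Proposition 2.4]{ShaulCompletion} together with \Cref{lem:RGamTensor}.

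If this route stalls, the fallback is to reduce to $\H_0(R)$ via $\RHom_R(\H_0(R),M)$, exactly as in the proof of \Cref{lem:depthLocal}. One then runs the same argument over the noetherian local ring $\H_0(R)$, where the one-element estimate is classical. The remaining check in that case is that $\RHom_R(\H_0(R),M)_\p\simeq\RHom_{R_\p}(\H_0(R_\p),M_\p)$. This should hold because $\H_0(R)\in\Dfb[R]$ admits a resolution by finitely generated semi-free pieces and $M\in\Dba[R]$.
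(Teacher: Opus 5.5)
\paragraph{Verdict.} Steps 1 and 2 are fine; Step 1 is exactly the paper's first inequality. Step 3, however, has a genuine gap.

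\paragraph{The gap in Step 3.} Via \Cref{lem:Idepth}, your one-step claim $\depth_R(I+(x),M)\le\depth_R(I,M)+1$ says $\sup\RGam[(x)]{N}\ge\sup N-1$ for $N=\RGam[I]{M}$. That is a \emph{lower} bound on $\sup\RGam[(x)]{N}$. The long exact sequence of $\RGam[(x)]{N}\to N\to N_x\to$ only yields upper bounds; indeed $\sup\RGam[(x)]{N}\le\sup N$. These translate into $\depth_R(I+(x),M)\ge\depth_R(I,M)$, i.e.\ monotonicity of depth, which is the opposite of what you need.

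The lower bound is not formal. If it fails and $s=\sup N$, the same sequence shows that $\H_s(N)\to\H_s(N)_x$ is bijective. So $x$ acts invertibly on $\H_s(N)\neq0$, and nothing in your argument rules this out.

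\paragraph{A counterexample.} This really happens, and your expectation that completeness is superfluous (or only guarantees finite depth) is wrong. Take $R=A$ a discrete valuation ring with uniformizer $x$ and fraction field $K$, let $\p=0$, and let $M=K\oplus A[-1]$ with $A$ placed in homological degree $-1$.
\begin{itemize}
\item Since $\RHom_A(\kk,K)=0$, one has $\depth_AM=\depth_AA[-1]=2$.
\item On the other hand, $\depth_{A_{(0)}}M_{(0)}=0$ and $\dim A/(0)=1$.
\end{itemize}
So the theorem fails for this bounded $M$ of finite depth. Your one-step claim with $I=0$ fails too, since $\depth_A(0,M)=-\sup M=0$; here $x$ is invertible on $\H_0(M)=K$. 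Of course this $M$ is not derived $\m$-complete, because $\LLam[\m]{K}=0$. Your fallback over $\HR$ has the same defect. The classical one-element estimate needs finitely generated homology, which $\RHom_R(\HR,M)$ lacks in general; in this example $\RHom_A(A,M)=M$.

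\paragraph{What is missing: a step that uses completeness.} The paper uses derived $\m$-completeness once, with no system of parameters.
\begin{enumerate}
\item After your Step 1 it writes $\depth_R(\p,M)=-\sup\RHom_R(\HR/\p,M)$.
\item It replaces $M$ by $\LLam[\m]{M}$ and uses Greenlees--May duality to turn this into $-\sup\RHom_R(\RGam[\m]{(\HR/\p)},M)$.
\item \Cref{lem:supRHom2} with $I=\m$ bounds this below by $\inf\RGam[\m]{(\HR/\p)}+\depth_RM$.
\item Grothendieck's non-vanishing theorem gives $\inf\RGam[\m]{(\HR/\p)}=-\dim\HR/\p$.
\end{enumerate}

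\paragraph{Repairing your iteration.} If you want to keep the element-by-element argument, switch from \v{C}ech to Koszul complexes.
\begin{itemize}
\item $K(R;\mathbf y,x)\lotimes_RM$ is the cone of multiplication by $x$ on $K(R;\mathbf y)\lotimes_RM$. Failure of the one-step bound therefore forces $x$ to act bijectively on the top homology of $K(R;\mathbf y)\lotimes_RM$.
\item That complex is derived $\m$-complete by \Cref{lem:dCompleteFree}.
\item Its homology modules are then derived complete. Restrict to $R_0$, where derived completeness with respect to a finitely generated ideal can be checked on homology.
\item A derived $(x)$-complete module on which $x$ is invertible is zero.
\end{itemize}
This derived Nakayama step is precisely where the hypothesis must enter.
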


\begin{proof}
The claim follows from the following chain of (in)equalities
\begin{align*}
\depth_{R_\p}M_\p&\geq\depth_R(\p,M)\\
&=-\sup\RHom_R(\H_0(R)/\p,M)\\
&=-\sup\RHom_R(\H_0(R)/\p,\LLam{M})\\
&=-\sup\RHom_R(\RGam{\H_0(R/\p)},M)\\
&\geq\inf\RGam{\H_0(R)/\p}+\depth_RM\\
&=\depth_RM-\dim\H_0(R)/\p,
\end{align*}
where the first inequality follows from \cite[Proposition 3.3]{ShaulKoszul}, the first equality follows by definition, the second inequality follows since $M$ is derived $\m$-complete, the third equality follows from \cite[Proposition 2.11]{ShaulCompletion} and \cite[Remark 2.14]{ShaulCompletion}, the second inequality follows from \Cref{lem:supRHom2}, and the last equality follows from \cite[Theorem 2.15]{ShaulCM}.
\end{proof}

\begin{corollary}\label{cor:depth}
Let $R$ be a DG-ring and $M$ a derived $\m$-complete $R$-complex. For every ideal $I$ of $\H_0(R)$ the following inequality holds
\[
\depth_RM\leq\depth_R(I,M)+\dim\H_0(R)/I.
\]
\end{corollary}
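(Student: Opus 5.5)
The plan is to combine \Cref{lem:depthLocal} with \Cref{thm:depthIneq}, choosing the prime carefully. By \Cref{lem:depthLocal},
\[
\depth_R(I,M)=\inf\{\depth_{R_\p}M_\p\mid\p\in\mathrm{V}(I)\}.
\]
For every $\p\in\mathrm{V}(I)$ we have $I\subseteq\p$, so $\H_0(R)/\p$ is a quotient of $\H_0(R)/I$ and hence $\dim\H_0(R)/\p\leq\dim\H_0(R)/I$. Applying \Cref{thm:depthIneq} to such a $\p$ then gives
\[
\depth_RM\leq\depth_{R_\p}M_\p+\dim\H_0(R)/\p\leq\depth_{R_\p}M_\p+\dim\H_0(R)/I.
\]
Taking the infimum over $\p\in\mathrm{V}(I)$ and using \Cref{lem:depthLocal} yields the claim. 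The set $\mathrm{V}(I)$ is nonempty when $I$ is proper, since it contains $\m$. If $I=\H_0(R)$, then $\mathrm{V}(I)=\emptyset$ and $\depth_R(I,M)=+\infty$ (with $\dim 0=-\infty$ we take the usual convention), and the inequality is read in the extended sense.

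The step needing care is the hypothesis of \Cref{lem:depthLocal}, which requires $M\in\Dba[R]$, whereas the corollary only assumes $M$ is derived $\m$-complete. I would either read the standing assumption from \Cref{thm:depthIneq} as including $M\in\Dba[R]$, or argue directly without localization. In the direct route, one bypasses \Cref{lem:depthLocal} and reruns the chain from the proof of \Cref{thm:depthIneq} with $\H_0(R)/I$ in place of $\H_0(R)/\p$:
\[
\depth_R(I,M)=-\sup\RHom_R(\RGam{\H_0(R)/I},M)\geq\inf\RGam{\H_0(R)/I}+\depth_RM.
\]
Here the equality uses that $M$ is derived $\m$-complete together with Greenlees--May duality, and the inequality uses \Cref{lem:supRHom2}. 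One then needs $\inf\RGam{\H_0(R)/I}=-\dim\H_0(R)/I$, which follows from Grothendieck's vanishing and nonvanishing theorems for the finitely generated $\H_0(R)$-module $\H_0(R)/I$. This second route avoids localization entirely and is probably the cleanest. I would present it as the main argument, with the localization argument as the intuition behind it.
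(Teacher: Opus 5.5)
Your proposal is correct. Your first argument is the paper's proof: the paper picks $\p\in\mathrm{V}(I)$ realizing the infimum in \Cref{lem:depthLocal} and applies \Cref{thm:depthIneq}, and taking the infimum over $\mathrm{V}(I)$ as you do is equivalent.

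The argument you want to make primary is genuinely different. You observe that the middle of the chain in the proof of \Cref{thm:depthIneq} never uses that $\p$ is prime. Running it with $\H_0(R)/I$ in place of $\H_0(R)/\p$ therefore gives directly
\[
\depth_R(I,M)=-\sup\RHom_R(\RGam{\H_0(R)/I},M)\geq\inf\RGam{\H_0(R)/I}+\depth_RM=\depth_RM-\dim\H_0(R)/I .
\]
The last step is immediate because $\RGam{\H_0(R)/I}$ is just the ordinary \v{C}ech complex of $\H_0(R)/I$ over $\H_0(R)$ on generators of $\m$, so Grothendieck's vanishing and nonvanishing theorems apply as you say. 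Nonvanishing also supplies the hypothesis $\H(\RGam{\H_0(R)/I})\neq0$ needed for \Cref{lem:supRHom2}.

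What your route buys is that it bypasses \Cref{lem:depthLocal}, and with it the reduction to $\H_0(R)$ and \cite[Proposition 2.10]{FoxbyIyengar}. It also shows that the ideal version is the more basic statement: \Cref{thm:depthIneq} is recovered from it by taking $I=\p$ and using $\depth_{R_\p}M_\p\geq\depth_R(\p,M)$. The paper's route, in exchange, is a two-line deduction from results already established.

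One correction to your motivation: the direct route does not remove the need for $M\in\Dba[R]$. \Cref{lem:supRHom2} is applied with $N=M$ and requires $N\in\Dba[R]$, and $\depth_RM$ is only defined for $M\in\Dba[R]$ in the first place. So boundedness above is an implicit hypothesis of the corollary in both arguments, not something the second one avoids.
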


\begin{proof}
By \Cref{lem:depthLocal} $\depth_R(I,M)=\depth_{R_\p}M_\p$ holds for some $\p\in \mathrm{V}(I)$. Since $\dim \H_0(R)/\p\leq\dim\H_0(R)/I$ holds, \Cref{thm:depthIneq} gives the desired inequality.
\end{proof}

We are now ready to prove the main result of this section, which shows that \cite[Theorem 5.6]{Zach} holds under weaker hypotheses. We recall that a DG-ring $R$ is said to have \emph{constant amplitude} if
\[
\amp R_\p=\amp R,\text{ for all }\p\in\Spec(\HR).
\]

\begin{theorem}\label{thm:dinit}
Let $R$ be a DG-ring with constant amplitude. Let $I$ be an ideal of $\H_0(R)$, and $F\in\Dfb[R]$ with $\inf F=0$. If $\H_i(F)$ is $I$-torsion for $i>0$ and a minimal generator of $\H_0(F)$ is $I$-torsion, then
\[
\projdim_RF+\amp R\geq\dim\H_0(R)-\dim\H_0(R)/I.
\]
\end{theorem}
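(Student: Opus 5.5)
Following the classical argument of the first author and Christensen \cite[Theorem 2.2]{IntThm}, I will tensor $F$ with a big Cohen--Macaulay object and compare depths, using \Cref{cor:depth} to absorb $\dim\H_0(R)/I$. As in \cite[Theorem 5.6]{Zach}, the constant amplitude hypothesis supplies a derived $\m$-complete DG $R$-module $B$ that behaves like a big Cohen--Macaulay module. Concretely, $B$ should be concentrated in degree $\amp R$ (or sit in $\Db[R]$ with $\sup B=\amp R$), and should satisfy
\[
\depth_R B=\dim\H_0(R)-\amp R,
\]
with $\H_0(F)\otimes B\neq 0$ in the appropriate degree. In practice $B$ is obtained from a big Cohen--Macaulay $\H_0(R)$-algebra via $\RHom_R(\H_0(R),-)$ or a dualizing-type construction, and is then derived $\m$-completed.

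Set $N=B\lotimes_R F$ and $p=\projdim_RF<\infty$. The proof has four steps.

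\textbf{Step 1 (upper bound on depth).} By an Auslander--Buchsbaum type estimate for DG-modules of finite projective dimension, I expect
\[
\depth_R N\geq \depth_R B-p.
\]
Together with the bound $\sup N\leq \sup B$, this controls $N$ from one side.

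\textbf{Step 2 (completeness).} By \Cref{lem:dCompleteFree}, $\LLam{N}\simeq\LLam{B}\lotimes_RF\simeq N$, so $N$ is derived $\m$-complete. Hence \Cref{cor:depth} gives
\[
\depth_R N\leq \depth_R(I,N)+\dim\H_0(R)/I.
\]

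\textbf{Step 3 (bounding the $I$-depth from above).} By \Cref{lem:Idepth}, $\depth_R(I,N)=-\sup\RGam[I]{N}$. I must show that $\RGam[I]{N}$ has nonzero homology in the top degree $\sup B$, i.e.\ that $\depth_R(I,N)\leq-\sup B=-\amp R$. The top homology of $N$ is $\H_{\amp R}(B)\otimes_{\H_0(R)}\H_0(F)$, because $\inf F=0$. The minimal generator $x$ of $\H_0(F)$ that is $I$-torsion gives a map $\H_0(R)/\operatorname{ann}(x)\to\H_0(F)$, and after tensoring with $\H(B)$ its image is $I$-torsion. This image is nonzero since $x$ is a minimal generator, so $x\otimes 1\neq0$ modulo $\m$ and big Cohen--Macaulay modules satisfy $\m B\neq B$. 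The hypothesis that $\H_i(F)$ is $I$-torsion for $i>0$ enters when arguing via the triangle $\tau_{\geq1}F\to F\to\H_0(F)$. After tensoring with $B$ and applying $\RGam[I]{}$, the $I$-torsion pieces are unchanged, so the top-degree $I$-torsion class survives in $\H_{\amp R}(\RGam[I]{N})=\Gamma_I(\H_{\amp R}(N))$. This identification uses that the top homology of $\RGam[I]{}$ of a bounded-above module is the $I$-torsion of its top homology.

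\textbf{Step 4 (combine).} Chaining the three bounds,
\[
\dim\H_0(R)-\amp R-p\leq\depth_RN\leq -\amp R+\dim\H_0(R)/I .
\]
This is off by $\amp R$ from the target inequality. I therefore expect the correct normalization to be $\depth_RB=\dim\H_0(R)$ with $\sup B=\amp R$ (so that $\depth_R B$ measures $-\sup\RHom(\kk,B)$ relative to degree $0$), or the reverse. I will fix the normalization so that the final line reads
\[
\dim\H_0(R)-p-\amp R\leq \dim\H_0(R)/I,
\]
which is the statement.

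The main obstacle is Step 3, together with making the big Cohen--Macaulay DG-module $B$ precise with the right degree and depth normalizations; constant amplitude is exactly what \cite{Zach} uses to build $B$. I would first try to reuse the construction and depth computation of \cite[Theorem 5.6]{Zach} verbatim. The only new ingredient would then be replacing ``finite length'' by ``$I$-torsion'' in the nonvanishing of the top $I$-local cohomology, via the \Cref{lem:Idepth} and \Cref{cor:depth} route above.
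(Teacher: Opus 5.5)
Your proposal has a genuine gap: Steps 1 and 3 rest on a false degree count, and the case where that count fails is exactly the part of the proof you are missing.

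\textbf{The degree count.} You use $\sup N\le\sup B$, and you claim the top homology of $N=B\lotimes_RF$ is $\H_{\sup B}(B)\otimes_{\H_0(R)}\H_0(F)$ ``because $\inf F=0$''. Neither holds. The condition $\inf F=0$ only controls the \emph{lowest} homology of $N$, by right exactness of $\otimes$. The positive homology of $F$ pushes $s=\sup N$ higher; already for $B=R$ one has $N=F$.

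So the $I$-torsion class coming from the minimal generator sits in a fixed low degree (degree $-\amp R$ in the paper's normalization, where $\depth_RM=\dim\H_0(R)$). In general this degree lies strictly below $s$, and there the class gives no lower bound on $\sup\RGam[I]{N}$. Below the top degree, $\H_a(\RGam[I]{N})$ need not equal $\Gamma_I\H_a(N)$, and local cohomology of higher homology can kill the class. For instance, $N=(A^2\xrightarrow{(x\;y)}A)$ over $A=\kk[[x,y]]$ has $\H_0(N)=\kk$ in its lowest degree, yet $\depth_AN=1$, i.e.\ $\sup\RGam[\m]{N}=-1$.

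The triangle $\tau_{\ge1}F\to F\to\H_0(F)$ does not repair this. The $\amp R$ discrepancy you found in Step 4 is a symptom of the same misplacement, and choosing the normalization after the fact is not an argument. Also, constant amplitude plays no role in producing the module: the paper gets a derived $\m$-complete $M$ of maximal depth by passing to the derived completion.

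\textbf{The missing case split.} The paper splits on $s=\sup(F\lotimes_RM)$, where $s\ge-\amp R$ by \cite[Lemma 3.5]{Zach}.

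\emph{Case $s=-\amp R$.} The class from the minimal generator lies in the top degree, so $\Gamma_I\H_{-\amp R}(F\lotimes_RM)\neq0$ and $\depth_R(I,F\lotimes_RM)=\amp R$. Your Steps 1--2 (Auslander--Buchsbaum, \Cref{lem:dCompleteFree}, \Cref{cor:depth}) then give exactly the stated bound. This is the paper's second case, and it is essentially your argument.

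\emph{Case $s>-\amp R$.} This needs a separate argument, which your proposal lacks.
\begin{enumerate}
\item Take $\p\in\Ass\H_s(F\lotimes_RM)$. Auslander--Buchsbaum and \Cref{thm:depthIneq} give $\projdim_{R_\p}F_\p\ge\dim\H_0(R)-\dim\H_0(R)/\p+s$.
\item Show $I\subseteq\p$. If not, the hypothesis that $\H_i(F)$ is $I$-torsion for $i>0$ gives $F_\p\simeq\H_0(F)_\p$. Then $\depth R_\p\ge\projdim_{R_\p}F_\p>\dim\H_0(R)_\p-\amp R$.
\item This contradicts $\depth R_\p\le\dim\H_0(R_\p)-\sup R_\p=\dim\H_0(R)_\p-\amp R$. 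The equality here is the one place constant amplitude is used, and this case is the only place the torsion hypothesis on $\H_{>0}(F)$ is needed.
\end{enumerate}
Once $I\subseteq\p$ is known, an element of $\H_s(F\lotimes_RM)$ with annihilator $\p$ is $I$-torsion. So your Step 3 does then go through, but in degree $s$, not in the degree you proposed.
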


\begin{proof}
Without loss of generality assume $\inf R=0$ and $\projdim_RF<\infty$. As in \cite[Theorem 5.6]{Zach} we can assume that there is a DG $R$-module of maximal depth $M$ (see \cite[Definition 3.2]{Zach}) by passing to the derived $\m$-adic completion of $R$. This DG $R$-module of maximal depth can be assumed to be derived $\m$-complete by \cite[Lemma 3.6]{Zach}. Let $s=\sup F\otimes_RM$, we notice that by \cite[Lemma 3.5]{Zach} $s\geq-\amp R$.

Let $\p\in\Ass_{\H_0(R)}\H_s(F\otimes_RM)$. It follows that $\H(F\otimes_RM)_\p$ is nonzero and therefore so are $\H(F)_\p$ and $\H(M)_\p$. One has the following chain of (in)equalities
\begin{align*}
\projdim_{R_\p}F_\p&=\depth_{R_\p}M_\p-\depth_{R_\p}(F\otimes_RM)_\p\\
&=\depth_{R_\p}M_{\p}+s\\
&\geq\depth_RM-\dim\H_0(R)/\p+s\\
&=\dim\H_0(R)-\dim\H_0(R)/\p+s,
\end{align*}
where the first equality is the Auslander-Buchsbaum formula \cite[Theorem 3.2]{Zach}, the second equality follows from \cite[Lemma 3.1]{Zach}, the inequality holds by \Cref{thm:depthIneq}, and the last equality holds as $M$ has maximal depth.

Assume first that $s>-\amp R$. It suffices to show that $I\subseteq\p$ since in this case one has
\begin{align*}
\projdim_RF&\geq\projdim_{R_\p}F_\p\\
&\geq\dim\H_0(R)-\dim\H_0(R)/\p+s\\
&>\dim \H_0(R)-\dim\H_0(R)/I-\amp R.
\end{align*}
To prove that $I\subseteq\p$ we assume that $I\not\subseteq\p$ and seek a contradiction. It follows that $F_\p$ is isomorphic to $\H_0(F)_\p$ in $\D(R)$ and therefore $\sup F_\p=0$. Now one has the following chain of (in)equalities
\begin{align*}
\depth_{R_\p} R_\p&=\depth F_\p+\projdim_{R_\p}F_\p\\
&\geq-\sup F_\p+\projdim_{R_\p}F_\p\\
&\geq\dim \H_0(R)-\dim\H_0(R)/\p+s\\
&\geq\dim \H_0(R)_\p+s\\
&>\dim\H_0(R)_\p-\amp R
\end{align*}
where the first equality is the Auslander-Buchsbaum formula \cite[Theorem 3.2]{Zach}, the first inequality from \cite[Proposition 3.2]{ShaulKoszul}, the second was proved above, and the third inequality is standard. By \cite[Proposition 3.5]{ShaulCM} the first inequality below holds
\begin{align*}
\depth_{R_\p}R_\p&\leq\dim\H_{\sup R_\p}(R_\p) - \sup R_\p\\
&\leq \dim\H_0(R_\p) - \sup R_\p \\
&=\dim \H_0(R)_\p-\amp R.
\end{align*}
The second inequality holds as $\H_{\sup R_\p}(R_\p)$ is an $\H_0(R_\p)$ module, and the last equality follows since $R$ has constant amplitude. This is a contradiction.

It remains to consider the case $s=-\amp R$. Since $\H_0(F)$ is finitely generated, every minimal generator in $\H_0(F)$ gives rise to a minimal generator in $\H_0(F)\otimes_{\H_0(R)}\kk$ by Nakayama's Lemma. By \cite[Lemma 3.2]{ShaulFinDim} $\H_0(F)\otimes_{\H_0(R)}\kk\simeq\H_0(F\lotimes_R\kk)$ as $\inf F=0$. The minimal generator in $\H_0(F\lotimes_R\kk)$ corresponding to the minimal generator of $\H_0(F)$ that is $I$-torsion gives rise to a nonzero element in $\H_{-\amp R}(F\lotimes_R(\kk\otimes_RM))$ by \cite[Diagram 3.3.2 in Theorem 3.5]{Zach}. The commutativity of \cite[Diagram 3.3.2]{Zach} implies that there is a nonzero element in $\H_{-\amp R}(M\otimes_RF)$ that is $I$-torsion, and so $\Gamma_I\H_{-\amp R}(M\otimes_RF)\neq0$. Since $\sup(M\otimes_RF)=-\amp R$, this implies that $\depth_R(I,M\otimes_RF)=\amp R$ by \cite[Lemma 3.1]{Zach}. By \Cref{lem:dCompleteFree} $F\otimes_RM$ is derived $\m$-complete and therefore \Cref{cor:depth} yields
\[
\depth_R(F\otimes_RM)\leq\depth_R(I,M)+\dim\H_0(R)/I=\amp R+\dim\H_0(R)/I.
\]
Applying the Auslander-Buchsbaum formula \cite[Theorem 3.2]{Zach} one has
\begin{align*}
\projdim_RF&=\depth_RM-\depth_R(F\otimes_RM)\\
&=\dim\H_0(R)-\depth_R(F\otimes_RM)\\
&\geq\dim\H_0(R)-\amp R-\dim\H_0(R)/I.\qedhere
\end{align*}
\end{proof}

\section{Foxby's Intersection Theorem}\label{sec:foxby}

Over a commutative noetherian local ring $R$, the Intersection Theorem, first proved by Peskine and Szpiro \cite{PeskineSzpiro} in the equicharacteristic case and later by Roberts \cite{Roberts1,Roberts2} in the general case, states that if $X$ and $Y$ are finitely generated modules with $X$ of finite projective dimension, then
\[
\projdim_RX+\dim_R X\otimes_RY\geq\dim_RY.
\]
This result was later generalized by Foxby \cite{foxflat} for complexes: if $X\in\Db[R]$ with finite flat dimension and $Y\in\Dfb[R]$, then
\[
\flatdim_RX+\dim_R X\lotimes_RY\geq\dim_RY.
\]
More recently, Yang generalized Foxby's result over DG-rings, see \cite[Theorem 3.1]{yang}. Namely, if $R$ is a DG-ring, $X\in\Db[R]$ of finite flat dimension and finite depth and $0\not\simeq Y\in\Dfbb[R]$ , then
\begin{equation}\label{eq:Yang}
\flatdim_RX+\ld_R X\lotimes_RY\geq\ld_RY.
\end{equation}
The main result of this section, \Cref{thm:foxby2}, improves Yang's result by replacing $\flatdim_RX$ with the smaller quantity $\depth R-\depth_RX$ (see \Cref{cor:flat>}) and replacing $\ld_RX\lotimes_RY$ with the smaller quantity $-\inf\RG[\m]{R}{(X\lotimes_RY)}$ (see \Cref{rem:infRGamma}). We start by proving some preliminary results.

\begin{proposition}\label{prp:abFlat} 
Let $X\in\Db[R]$ with $\flatdim_RX<\infty$, and let $Y\in\Dba[R]$ be derived $\m$-torsion, then
\[
\sup X\lotimes_RY-\sup Y=\depth R-\depth X.
\]
\end{proposition}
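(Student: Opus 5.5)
The plan is to establish the formula first for one test module, namely $Y=\RGam{R}$, and then reduce an arbitrary derived $\m$-torsion $Y$ to that case by a Matlis-duality argument.

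\emph{The test case $Y=\RGam{R}$.} By \Cref{lem:RGamTensor} we have $X\lotimes_R\RGam{R}\simeq\RGam{X}$. By \Cref{lem:Idepth} with $I=\m$ (using that $X\in\Db[R]$), $\sup\RGam{X}=-\depth_R X$. Applying \Cref{lem:Idepth} to $R$ itself gives $\sup\RGam{R}=-\depth R$. Subtracting yields
\[
\sup X\lotimes_R\RGam{R}-\sup\RGam{R}=\depth R-\depth_R X,
\]
which is the claimed equality in this case. If $\depth_R X=\infty$, the same computation shows $\RGam{X}\simeq 0$. I will then have to check separately that $X\lotimes_R Y\simeq0$ for every derived $\m$-torsion $Y$. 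This follows from \Cref{lem:RGamTensor}, since $X\lotimes_RY\simeq X\lotimes_R\RGam{Y}\simeq\RGam{X}\lotimes_RY$, so the formula holds with the convention $-\infty=-\infty$.

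\emph{Reduction of general $Y$.} Because $Y$ is derived $\m$-torsion, \Cref{lem:RGamTensor} gives $X\lotimes_RY\simeq\RGam{X}\lotimes_R Y$. So the task is to show that tensoring with the derived torsion module $Z=\RGam{X}$, which has finite flat dimension, shifts $\sup$ by exactly $\sup Z$ on derived $\m$-torsion modules. To do this I would pass to Matlis duals. Let $E$ be the injective hull of $\kk$ over $\HR$ and set $D(-)=\RHom_R(-,E)$. Derived tensor–hom adjunction gives $D(X\lotimes_RY)\simeq\RHom_R(X,DY)$. Exactness of Matlis duality on homology gives $\sup W=-\inf DW$. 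Hence the statement becomes
\[
\inf\RHom_R(X,DY)=\inf DY-(\depth R-\depth_R X).
\]
Here $DY$ is derived $\m$-complete and lies in $\Dbb[R]$, because $Y$ is derived $\m$-torsion with $\sup Y<\infty$. The displayed equality is an Ischebeck/Auslander–Buchsbaum type formula. I would prove it by the same kind of manipulation used in the proof of \Cref{thm:depthIneq}: replace $X$ by $\RGam{X}$ via Greenlees–May duality \cite[Proposition 2.11]{ShaulCompletion}, use \cite[Lemma 2.7(ii)]{ShaulFinDim} to pull the finite-flat-dimension module through $\RHom$, and compare with the test case.

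\emph{Expected main obstacle.} The difficulty is the inequality ``$\geq$'' in the last display, equivalently showing $\sup(X\lotimes_RY)$ is not larger than $\sup Y+\depth R-\depth_RX$. The reverse inequality should come from a nonzero top class: take a socle element of $\H_{\sup Y}(Y)$, which exists because this module is $\m$-torsion, and a Nakayama-type argument as in \cite[Lemma 3.2]{ShaulFinDim}. I will have to check carefully that the hypotheses really force equality rather than only an inequality. For a general non-finitely generated $X$, the quantity $\sup(X\lotimes_R\kk)$ can a priori behave like $\flatdim_RX$ rather than $\depth R-\depth_RX$. If this becomes an obstruction, my fallback is to use the hypotheses in the precise form needed in \Cref{thm:foxby2} and prove only that form there.
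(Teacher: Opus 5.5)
\textbf{There is a genuine gap: the reduction from $\RGam{R}$ to a general derived $\m$-torsion $Y$, which carries all the content, is not carried out.}

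\emph{The test case does not carry the statement.} Your computation for $Y=\RGam{R}$ is correct. But it never uses $\flatdim_RX<\infty$: the identity $\sup(X\lotimes_R\RGam{R})-\sup\RGam{R}=\depth R-\depth_RX$ follows from \Cref{lem:RGamTensor} and \Cref{lem:Idepth} for every nonzero $X\in\Db[R]$. The proposition, however, fails without that hypothesis. Take $X=Y=\kk$ over an ordinary non-regular local ring: the left side is $\infty$ and the right side is $\depth R$.

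\emph{The reduction step has no mechanism.} The Matlis-dual display is only a restatement of the claim. ``Compare with the test case'' does not help, because an equality of $\sup$'s for the single object $\RGam{R}$ cannot be transported to other derived $\m$-torsion objects such as $\kk$. Along a triangle you only bound the $\sup$ of the middle term by the larger of the outer ones. Moreover, the tensor-evaluation isomorphism you cite handles a finite-flat-dimension module tensored onto the second argument of $\RHom$. It does not handle one sitting in the first argument, as $X$ does in $\RHom_R(X,DY)$.

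\emph{A slip in the duality setup.} With $E$ the injective hull of $\kk$ over $\HR$, adjunction gives $\RHom_R(W,E)\simeq\RHom_{\HR}(\HR\lotimes_RW,E)$. So $-\inf DW=\sup(\HR\lotimes_RW)$, not $\sup W$; for $W=R$ this is $0$ rather than $\amp R$. You would need the injective DG-module $E(R,\m)$ of \cite{injective} instead.

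\emph{The missing ingredient} is the depth formula $\depth_R(X\lotimes_RY)=\depth_RX+\depth_RY-\depth R$ for $X\in\Db[R]$ of finite flat dimension, \cite[Lemma 2.5]{yang}. This is what the paper uses, and it is where finite flat dimension enters:
\begin{itemize}
\item Tensor evaluation gives $\RHom_R(\kk,Y)\lotimes_RX\simeq\RHom_R(\kk,Y\lotimes_RX)$.
\item Since $\RHom_R(\kk,Y)$ is a DG $\kk$-module, the left side is $\RHom_R(\kk,Y)\lotimes_{\kk}(\kk\lotimes_RX)$.
\item Because $\kk$ is a field, its $\sup$ is $\sup\RHom_R(\kk,Y)+\sup(\kk\lotimes_RX)$.
\end{itemize}
Taking $Y=R$ gives $\sup(\kk\lotimes_RX)=\depth R-\depth_RX$. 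So your worry that this quantity might behave like $\flatdim_RX$ is unfounded; this is \Cref{cor:abFlat}.

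With the depth formula in hand, the proposition needs no duality. $Y$ is derived $\m$-torsion by hypothesis, and $X\lotimes_RY$ is derived $\m$-torsion by \Cref{lem:RGamTensor}. Then \Cref{lem:Idepth} rewrites $\depth_R(X\lotimes_RY)$ and $\depth_RY$ as $-\sup(X\lotimes_RY)$ and $-\sup Y$. Subtracting gives the claim.
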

\begin{proof}
It follows from \cite[Lemma 2.5]{yang} that
\begin{equation}\label{eq:ddf}
\depth X\lotimes_RY=\depth X+\depth Y-\depth R.
\end{equation}
Moreover by \Cref{lem:RGamTensor} $X\lotimes_RY$ is also derived $\m$-torsion. Now the desired equality follows from \eqref{eq:ddf} and from the observation that by \Cref{lem:Idepth} one has
\[
\depth X\lotimes_RY=-\sup X\lotimes_RY,\quad \depth Y=-\sup Y.\qedhere
\]
\end{proof}

\begin{corollary}\label{cor:abFlat}
If $X\in\Db[R]$ and $\flatdim_RX<\infty$, then
\[
\sup \kk\lotimes_RX=\depth R-\depth X.
\]
\end{corollary}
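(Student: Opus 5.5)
The plan is to apply \Cref{prp:abFlat} with $Y=\kk$, regarded as a DG $R$-module through the augmentation $R\to\H_0(R)\to\kk$. The proof then reduces to checking the hypotheses on $\kk$ and computing the two relevant invariants of $\kk$.

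First, $\kk\in\Dba[R]$, since its homology is concentrated in degree $0$, and $\sup\kk=0$. Second, $\kk$ is derived $\m$-torsion. To verify this, I would note that $\H(\kk)=\kk$ is $\m$-torsion and bounded. By the standard criterion (e.g.\ \cite[Proposition 2.9]{ShaulCompletion}, or the characterization that a DG-module with bounded, $\m$-torsion homology is derived $\m$-torsion), it follows that $\RGam{\kk}\simeq\kk$.

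Alternatively, one can compute directly: $\kk\lotimes_{R_0}\check{C}(\mathbf{j})$ reduces to the \v{C}ech complex of $\kk$ over $R_0$ on elements that act on $\kk$ by $0$. Each localization $\kk_{j_i}$ vanishes, so the \v{C}ech complex collapses to $\kk$ itself.

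With these hypotheses in place, \Cref{prp:abFlat} gives
\[
\sup X\lotimes_R\kk-\sup\kk=\depth R-\depth X.
\]
Since $\sup\kk=0$ and $X\lotimes_R\kk\simeq\kk\lotimes_RX$ by commutativity of the derived tensor product over the commutative DG-ring $R$, this is exactly $\sup\kk\lotimes_RX=\depth R-\depth X$.

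The only step requiring care is the derived $\m$-torsion property of $\kk$. I expect this to follow from the cited results on derived torsion for DG-modules with $\m$-torsion homology, so it should not be a genuine obstacle.
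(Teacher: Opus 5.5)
Your proposal is correct and is the paper's argument: the paper also specializes \Cref{prp:abFlat} to $Y=\kk$, using that $\kk$ is derived $\m$-torsion and $\sup\kk=0$. Your direct \v{C}ech-complex check of the torsion property (each localization of $\kk$ at an element of $J$ vanishes, so $\RGam{\kk}\simeq\kk$) supplies a detail the paper leaves implicit, and it is sound.
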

\begin{proof}
Follows immediately from \Cref{prp:abFlat} since $\kk$ is derived $\m$-torsion.
\end{proof}

The next corollary allows us to link the depth of a DG $R$-module $X$ of finite flat dimension to the depth over $\HR$ of $X\lotimes_R\HR$. This will be used in subsequent proofs which work by reduction to $\HR$.

\begin{corollary}\label{cor:depthBaseChange}
Let $X\in\Db[R]$ with $\flatdim_RX<\infty$, then
\[
\depth_RX=\depth_{\H_0(R)} X\lotimes_R\H_0(R)+\depth R-\depth H_0(R).
\]
\end{corollary}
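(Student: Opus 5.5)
We would derive the formula by applying \Cref{cor:abFlat} twice, once over $R$ and once over $\HR$, and comparing the two values of $\sup \kk\lotimes(-)$. Set $X'=X\lotimes_R\HR$.

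First we check that $X'$ satisfies the hypotheses of \Cref{cor:abFlat} over the local ring $\HR$ (viewed as a DG-ring concentrated in degree $0$). It has finite flat dimension over $\HR$, since flat dimension does not increase under base change: for $N\in\Db[\HR]$ one has $N\lotimes_{\HR}X'\simeq N\lotimes_R X$, so the $\Tor$-vanishing defining $\flatdim_RX$ gives $\flatdim_{\HR}X'\le\flatdim_RX<\infty$. It is homologically bounded: $\inf X'\ge\inf X$ by the standard inequality for derived tensor products of nonnegative objects, and $\sup X'\le \sup\HR+\flatdim_RX<\infty$ by the definition of flat dimension, since $\HR\in\Db[R]$.

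With this in place, \Cref{cor:abFlat} over $\HR$ gives
\[
\sup \kk\lotimes_{\HR}X'=\depth \HR-\depth_{\HR}X'.
\]
Associativity of the derived tensor product gives
\[
\kk\lotimes_{\HR}X'=\kk\lotimes_{\HR}(\HR\lotimes_R X)\simeq \kk\lotimes_R X,
\]
and \Cref{cor:abFlat} over $R$ gives $\sup\kk\lotimes_RX=\depth R-\depth_RX$. Equating the two expressions for $\sup\kk\lotimes_RX$ and rearranging yields exactly the claimed equality. Depths here are finite or infinite together, so if either side of the identity is infinite the other is too, and no separate case is needed.

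The main obstacle is making sure \Cref{cor:abFlat} legitimately applies to $X'$ over $\HR$. This amounts to the finiteness of $\flatdim_{\HR}X'$ and the boundedness of $X'$. Both should reduce to the tensor identity $N\lotimes_{\HR}X'\simeq N\lotimes_RX$ for $\HR$-modules $N$ together with the $\Tor$-vanishing built into the definition of $\flatdim$. We also need to check that $\depth_{\HR}$ of $X'$ agrees with the $\RHom$-definition over the ordinary ring $\HR$, which is immediate from the definition.
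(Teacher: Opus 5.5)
Your proposal is correct and takes essentially the paper's route: compute $\sup\kk\lotimes_RX$ from \Cref{cor:abFlat} over $R$, compute $\sup\kk\lotimes_{\HR}(X\lotimes_R\HR)$ from the analogous Auslander--Buchsbaum-type formula over $\HR$, and identify the two by tensor cancellation. The only difference is cosmetic: for the $\HR$ side the paper cites the classical formula \cite[Corollary 16.3.3]{LarsBook}, whereas you reapply \Cref{cor:abFlat} to $\HR$ viewed as a DG-ring and check by hand that $X\lotimes_R\HR$ is bounded with finite flat dimension.
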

\begin{proof}
By \Cref{cor:abFlat}
\[
\depth_RX=\depth R-\sup\kk\lotimes_RX.
\]
By \cite[Corollary 16.3.3]{LarsBook} and \cite[Lemma 2.7]{ShaulFinDim} it follows that
\[
\depth_{\H_0(R)}X\lotimes_R\H_0(R)=\depth \H_0(R)-\sup\kk\lotimes_{\H_0(R)}X\lotimes_R\H_0(R),
\]
now one concludes by tensor cancellation.
\end{proof}

The result below shows that for bounded DG $R$-modules $X$ of finite flat dimension $\depth R-\depth_RX$ is smaller than the flat dimension of $X$.
\begin{corollary}\label{cor:flat>}
Let $X\in\Db[R]$ with $\flatdim_RX<\infty$, then
\[
\flatdim_RX\geq\depth R-\depth X.
\]
\end{corollary}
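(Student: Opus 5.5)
By \Cref{cor:abFlat} we have
\[
\depth R-\depth X=\sup \kk\lotimes_R X ,
\]
so the statement amounts to the bound $\sup\kk\lotimes_RX\leq\flatdim_RX$. The plan is to read this bound off directly from the definition of flat dimension, taking $N=\kk$ in that definition.

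First, $\kk$ lies in $\Db[R]$, since it is concentrated in degree $0$, and $\sup\kk=0$. The definition of $\flatdim_RX$ then says that
\[
\Tor_i^R(\kk,X)=\H_i(\kk\lotimes_RX)=0\quad\text{for all } i>\flatdim_RX+\sup\kk=\flatdim_RX.
\]
Here we use that $\flatdim_RX$ is finite, so that the infimum in the definition is attained at an integer $n=\flatdim_RX$ with the stated vanishing.

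Consequently $\sup\kk\lotimes_RX\leq\flatdim_RX$. Combining this with \Cref{cor:abFlat} gives
\[
\flatdim_RX\geq\sup\kk\lotimes_RX=\depth R-\depth X,
\]
which is the claim.

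The only point needing care is the degenerate case. If $X\simeq0$, then $\depth X=\infty$ and $\flatdim_RX=-\infty$. With the convention $\sup 0=-\infty$, the equality of \Cref{cor:abFlat} reads $-\infty=-\infty$, and the inequality holds trivially, or this case is simply excluded. If $X\not\simeq0$, then $\kk\lotimes_RX\not\simeq0$ by the derived Nakayama lemma, and both sides are finite integers. No other step is an obstacle; the argument is just these two observations combined.
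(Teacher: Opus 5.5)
Your proof is correct, but it takes a different route from the paper's.

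The paper works by reduction to $\HR$. It writes $\flatdim_RX=\flatdim_{\HR}X\lotimes_R\HR$ via \cite[Corollary 2.3]{ShaulFinDim}, applies the classical inequality $\flatdim\geq\depth\HR-\depth$ for complexes over the local ring $\HR$ \cite[Corollary 17.3.4]{LarsBook}, and converts $\depth_{\HR}X\lotimes_R\HR$ back to $\depth_RX$ using \Cref{cor:depthBaseChange}.

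You stay over $R$ and test the definition of flat dimension against $N=\kk$, so that \Cref{cor:abFlat} does all the work. Since \Cref{cor:depthBaseChange} is itself deduced from \Cref{cor:abFlat}, both proofs rest on the same input. Yours is shorter: it needs neither the base change of flat dimension to $\HR$ nor the external classical result. It also shows that $\depth R-\depth X$ is exactly the part of $\flatdim_RX$ detected by $\Tor^R(\kk,X)$. The paper's version mainly has the merit of matching the reduce-to-$\HR$ pattern used elsewhere in the section; it buys nothing extra for this statement.

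One correction to your last paragraph. The claim that $X\not\simeq0$ forces $\kk\lotimes_RX\not\simeq0$ ``by derived Nakayama'' is false here. Derived Nakayama needs finitely generated homology, and $X$ is only assumed to lie in $\Db[R]$, not in $\Df[R]$. For instance, if $R$ is a DVR with fraction field $Q$, then $Q$ is flat and nonzero, yet $\kk\lotimes_RQ=\kk\otimes_RQ=0$. The same happens for $E(R,\p)$ with $\p$ non-maximal in the paper's example, since $\kk\lotimes_RE(R,\p)\simeq\kk\lotimes_R\RGam[\m]{E(R,\p)}=0$ by \Cref{lem:RGamTensor}.

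In that situation $\depth X=\infty$, the right-hand side is $-\infty$, and the inequality is trivial. With the convention $\sup 0=-\infty$, your main argument $\sup\kk\lotimes_RX\leq\flatdim_RX$ is unaffected. Just delete the assertion that both sides are always finite integers.
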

\begin{proof}
The asserted inequality follows from the following string of (in)equalities
\begin{align*}
\flatdim_RX&=\flatdim_{H_0(R)}X\lotimes_R\H_0(R)\\
&\geq\depth H_0(R)-\depth_{H_0(R)}X\lotimes_R\H_0(R)\\
&=\depth H_0(R)-\depth_RX+\depth R-\depth H_0(R)\\
&=\depth R-\depth X.
\end{align*}
The first equality follows from \cite[Corollary 2.3]{ShaulFinDim}, the inequality follows from \cite[Corollary 17.3.4]{LarsBook}, the second equality from \Cref{cor:depthBaseChange}.
\end{proof}

\begin{remark}\label{rem:infRGamma}
We recall that by \cite[Proposition 2.9]{ShaulCM} if $M\in\Dba[R]$, then $-\inf\RG[\m]{R}{M}\leq\ld_RM$.
\end{remark}

The theorem below is our improvement of Yang's result.

\begin{theorem}\label{thm:foxby2}
Let $X\in\Db[R]$ and $Y\in\Dfbb[R]$ with $Y\not\simeq0$, $\flatdim_RX<\infty$ and $\depth_RX<\infty$. Then,
\[
\depth R-\depth_RX-\inf\RG[\m]{R}{(X\lotimes_RY)}\geq\ld_RY.
\]
\end{theorem}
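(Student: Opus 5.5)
The plan is to localize at a prime realizing $\ld_R Y$, apply \Cref{prp:abFlat} on the local ring there, and then climb back to $\m$ by a dimension-shift estimate for local cohomology, mirroring Foxby's classical argument.

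\emph{Step 1: choose the prime.} Since $Y\in\Dfbb[R]$ is nonzero, pick $l$ and $\p\in\Spec\HR$ with $\p$ minimal in the support of $\H_l(Y)$ such that $\ld_RY=\dim\HR/\p-l$. If $\ld_R Y=\infty$ we need a limiting version of this choice, but the argument below is uniform in $l$. Localizing, $\H(Y_\p)$ has finite length over $\H_0(R)_\p$ in degree $l$; more precisely, $\RG[\p]{R_\p}{Y_\p}$ has $\H_l$ nonzero and of finite length.

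\emph{Step 2: a local estimate.} Over $R_\p$ we apply \Cref{prp:abFlat} to $X_\p$, which has finite flat dimension, and to $\RG[\p]{R_\p}{Y_\p}$. The latter is derived $\p$-torsion, but it may not be bounded above. To avoid this, I would instead use the truncation: replace $Y$ by a finite-length or torsion piece. The cleanest route is to take the Koszul complex $K=K(R_\p;\mathbf{x})$ on generators of $\p$, which is derived $\p$-torsion up to $\RGam$, and use \Cref{lem:RGamTensor} together with \Cref{prp:abFlat}. This should yield
\[
-\inf\RG[\p R_\p]{R_\p}{(X\lotimes_RY)_\p}\ \ge\ \ldots,
\]
an inequality of the form $\depth R_\p-\depth X_\p-\inf\RGam{(X\lotimes Y)_\p}\ge -l$, that is, the theorem at $\p$ with $\dim\HR/\p$ replaced by $0$. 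Alternatively, I would pass to $\HR$ via \Cref{cor:depthBaseChange} and reduce to Foxby's classical inequality with depth, as in \cite{foxflat}. That inequality reads $\depth R-\depth X+\dim(X\otimes Y)\ge\dim Y$, with the depth form due to Iyengar--Foxby.

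\emph{Step 3: return to $\m$.} I need two transfer inequalities. The first is $\depth R_\p-\depth X_\p\le\depth R-\depth X-\dim\HR/\p$-type control. Equivalently, $\depth_R X\le\depth_{R_\p}X_\p+\dim\HR/\p$; this is \Cref{thm:depthIneq} when $X$ is completed. Since $X$ has finite flat dimension, $\LLam{X}\simeq \LLam{R}\lotimes_RX$ by \Cref{lem:dCompleteFree}-type arguments, and depth is unchanged. We also need $\depth R\ge \depth R_\p+{}$something. That is the wrong direction, so instead I would keep the difference $\depth R-\depth X$ together and use \Cref{cor:abFlat}: $\depth R-\depth X=\sup\kk\lotimes X$, and $\sup \kk\lotimes X\ge\sup\kappa(\p)\lotimes X_\p$ by semicontinuity of Betti-type numbers. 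The second transfer inequality is $-\inf\RGam{(X\lotimes Y)}\ge -\inf\RG[\p]{}{(X\lotimes Y)_\p}+\dim\HR/\p$. This is the standard Grothendieck-type nonvanishing shift for local cohomology under localization, which I would derive from \cite[Proposition 2.9]{ShaulCM}-style arguments, or by reducing via $\H_0$ and the Hartshorne--Lichtenbaum/Grothendieck bound for the module $\H_{\inf}$.

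\emph{Main obstacle.} The hardest step is this second transfer inequality, relating $\inf$ of local cohomology at $\m$ to that at $\p$ with a gain of $\dim\HR/\p$ for the unbounded object $X\lotimes_RY$. I would first attempt it by taking a maximal depth/dualizing argument. Failing that, I would apply Matlis/local duality after completing and passing to a dualizing DG-module, turning $-\inf\RGam$ into a dimension of an $\Ext$-type module, where localization is easy.
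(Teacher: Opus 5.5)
There is a genuine gap, and it sits in Step~3. In your scheme the whole gain of $\dim\H_0(R)/\p$ has to come from the second transfer inequality
\[
-\inf\mathbf{R}\Gamma_{\m}(X\lotimes_RY)\ \ge\ -\inf\mathbf{R}\Gamma_{\p R_\p}\bigl((X\lotimes_RY)_\p\bigr)+\dim\H_0(R)/\p ,
\]
because you rightly observe that the depth transfer goes the wrong way. This inequality is Grothendieck nonvanishing combined with $\dim M\ge\dim M_\p+\dim\H_0(R)/\p$. It holds for complexes with finitely generated homology, but $X\lotimes_RY$ is not one, and the inequality is false here.

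Here is a counterexample. Let $R=A$ be a Gorenstein local ring of dimension $d>0$, viewed as a DG-ring in degree $0$. Let $Y=A$ and $X=E_A(A/\p)\oplus E_A(A/\m)$ with $\p$ a minimal prime; this is the classical version of the paper's closing example.
\begin{itemize}
\item The hypotheses hold: $\flatdim_AX<\infty$ and $\depth_AX=0$.
\item $A$ is Cohen--Macaulay, so the primes your Step~1 can select are exactly the minimal primes, with $l=0$.
\item $\mathbf{R}\Gamma_\m X\simeq E_A(A/\m)$, so the left side is $0$.
\item $X_\p\simeq E_A(A/\p)$ is $\p A_\p$-torsion, so the right side is $d$.
\end{itemize}

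More fundamentally, whenever $X$ is derived $\m$-torsion, $X_\p=0$ for every $\p\neq\m$. A typical case is a nonzero finite length module of finite projective dimension with $Y=A$. Then every localized quantity in Steps~2 and~3 is infinite and the chain carries no information. Yet in that case the theorem is the deep statement that $A$ must be Cohen--Macaulay. No localization or semicontinuity argument can produce this without importing an intersection theorem.

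Step~2 is also not established as written. \Cref{prp:abFlat} computes a $\sup$, so your Koszul route bounds $\inf\mathbf{R}\Gamma$ by $\sup(Y\lotimes K)+\depth R_\p-\depth X_\p$ rather than by $l$. In addition, $Y$ need not be bounded above.

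The missing idea is to skip localization and work at $\m$ after base change to $\H_0(R)$, which is what the paper does.
\begin{itemize}
\item Set $X'=X\lotimes_R\H_0(R)$ and $Y'=Y\lotimes_R\H_0(R)$. By \cite[Lemma 2.12]{ShaulCM}, $-\inf\mathbf{R}\Gamma^R_\m(X\lotimes_RY)=-\inf\mathbf{R}\Gamma_\m(X'\lotimes_{\H_0(R)}Y')$.
\item Since $\mathbf{R}\Gamma_\m(X')\lotimes_{\H_0(R)}Y'$ is $\m$-torsion, this number is its Krull dimension.
\item Apply the classical inequality $\dim(A\lotimes B)\ge\depth A+\dim B-\depth\H_0(R)$ of \cite[Proposition 18.5.1]{LarsBook} with $A=\mathbf{R}\Gamma_\m X'$ and $B=Y'$. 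Here $A$ still has finite flat dimension and the same depth as $X'$; all the deep input is hidden in this inequality.
\item Translate back using $\dim_{\H_0(R)}Y'=\ld_RY$ and \Cref{cor:depthBaseChange}.
\end{itemize}
Your ``alternatively'' remark in Step~2 points in this direction. Applied to $X'$ itself, however, it only bounds $\dim(X'\lotimes Y')$, i.e.\ Yang's $\ld_R(X\lotimes_RY)$. Replacing $X'$ by $\mathbf{R}\Gamma_\m X'$ is what turns $\dim$ into $-\inf\mathbf{R}\Gamma_\m$, and done at $\m$ it makes Steps~1 and~3 unnecessary.
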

\begin{proof}
The asserted inequality follows from the following string of (in)equalities
\begin{align*}
-\inf\RG[\m]{R}{(X\lotimes_RY)}&=-\inf\RG[\m]{\HR}{(X\lotimes_RY\lotimes_R\HR)}\\
&=-\inf\RG[\m]{\HR}{((X\lotimes_R\HR)\lotimes_{\HR}(Y\lotimes_R\HR))}\\
&=\dim_{\HR}\RG[\m]{\HR}{((X\lotimes_R\HR)\lotimes_{\HR}(Y\lotimes_R\HR))}\\
&=\dim_{\HR}\RG[\m]{\HR}{(X\lotimes_R\HR)}\lotimes_{\HR}(Y\lotimes_R\HR)\\
&\geq\depth_{\HR}\RG[\m]{\HR}{(X\lotimes_R\HR)}+\dim_{\HR}Y\lotimes_R\HR-\depth\HR\\
&=\depth_{\HR}(X\lotimes_R\HR)+\dim_{\HR}Y\lotimes_R\HR-\depth\HR\\
&=\depth_RX+\ld_RY-\depth R.
\end{align*}
Where the first equality follows from \cite[Lemma 2.12]{ShaulCM}, the second is just tensor cancellation, the third comes from \cite[Theorem 16.1.34(b)]{LarsBook}, the fourth from \cite[Proposition 13.3.19]{LarsBook}, the inequality from \cite[Proposition 18.5.1]{LarsBook} (which can be applied by \cite[Corollary 2.3]{ShaulFinDim}), the fifth equality follows from \cite[Theorem 16.2.14]{LarsBook}, and the last equality from \cite[Proposition 2.13]{ShaulCM} and \Cref{cor:depthBaseChange}.
\end{proof}

As a corollary we deduce the following amplitude inequality for the local cohomology of a derived tensor product, which generalizes the right inequality of \cite[Theorem 3.1]{amp} to DG-rings under some slightly stronger hypotheses.

\begin{corollary}\label{cor:ampRGam}
Let $X\in\Db[R]$ and $Y\in\Dfb[R]$ with $Y\not\simeq0$, $\flatdim_RX<\infty$ and $\depth_RX<\infty$. Then,
\[
\amp\RGam[\m]{(X\lotimes_RY)}\geq\amp\RGam[\m]{Y}.
\]
\end{corollary}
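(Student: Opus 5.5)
We split $\amp\RGam[\m]{(X\lotimes_RY)}=\sup\RGam[\m]{(X\lotimes_RY)}-\inf\RGam[\m]{(X\lotimes_RY)}$ into its two ends. The lower end is controlled by \Cref{thm:foxby2}, the upper end by \Cref{prp:abFlat}, and the two estimates are then combined.

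\emph{Upper end.} By \Cref{lem:RGamTensor}, $\RGam[\m]{(X\lotimes_RY)}\simeq X\lotimes_R\RGam[\m]{Y}$. Since $Y\in\Dfb[R]$ is bounded above, $\RGam[\m]{Y}\in\Dba[R]$. It is derived $\m$-torsion, because $\RGam[\m]{}$ is idempotent, and it is nonzero. Indeed, $Y\not\simeq0$ gives $\depth_RY<\infty$, and by \Cref{lem:Idepth} $-\sup\RGam[\m]{Y}=\depth_RY$. Applying \Cref{prp:abFlat} to $X$ and $\RGam[\m]{Y}$ gives
\[
\sup\RGam[\m]{(X\lotimes_RY)}=\sup\RGam[\m]{Y}+\depth R-\depth_RX.
\]

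\emph{Lower end.} Since $Y\in\Dfb[R]\subseteq\Dfbb[R]$, \Cref{thm:foxby2} applies and yields
\[
-\inf\RGam[\m]{(X\lotimes_RY)}\geq\ld_RY-\depth R+\depth_RX.
\]

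\emph{Combining.} Adding the two displays, the terms $\depth R-\depth_RX$ cancel, and we get
\[
\amp\RGam[\m]{(X\lotimes_RY)}\geq\sup\RGam[\m]{Y}+\ld_RY.
\]
It then suffices to know that $\ld_RY=-\inf\RGam[\m]{Y}$ for $Y\in\Dfb[R]$. This is the DG analogue of Grothendieck's nonvanishing theorem, and I expect it is in \cite{ShaulCM}, alongside the inequality recalled in \Cref{rem:infRGamma}.

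\emph{Main obstacle.} The delicate step is justifying this equality $\ld_RY=-\inf\RGam[\m]{Y}$. \Cref{rem:infRGamma} only supplies the inequality $-\inf\RGam[\m]{Y}\leq\ld_RY$, which is the direction we already have for free. If the equality is not available, I would reduce to $\HR$ as in the proof of \Cref{thm:foxby2}. By \cite[Lemma 2.12]{ShaulCM}, $-\inf\RGam[\m]{Y}=-\inf\RG[\m]{\HR}{(Y\lotimes_R\HR)}$. By \cite[Theorem 16.1.34(b)]{LarsBook}, the right-hand side equals $\dim_{\HR}(Y\lotimes_R\HR)$. Finally, \cite[Proposition 2.13]{ShaulCM} identifies $\dim_{\HR}(Y\lotimes_R\HR)$ with $\ld_RY$, which is exactly how it was used in the last step of \Cref{thm:foxby2}. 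This chain should hold for finitely generated bounded $Y$ without any flatness hypothesis.
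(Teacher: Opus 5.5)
Your proof is correct and is essentially the paper's argument: the paper handles the upper end via $\sup\RGam[\m]{(X\lotimes_RY)}=-\depth_R(X\lotimes_RY)$ (\Cref{lem:Idepth}) together with Yang's depth formula, which is exactly what your application of \Cref{prp:abFlat} to $\RGam[\m]{Y}$ unpacks to, it handles the lower end by \Cref{thm:foxby2}, and it finishes with $\ld_RY-\depth_RY=\amp\RGam[\m]{Y}$ from \cite[Propositions 2.8 and 3.3]{ShaulCM}. Your ``main obstacle'' is not actually one: after combining you only need $\ld_RY\geq-\inf\RGam[\m]{Y}$, which is precisely the inequality in \Cref{rem:infRGamma}, so you need neither the equality nor the reduction to $\HR$.
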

\begin{proof}
The desired inequality follows form the following chain of (in)equalities
\begin{align*}
\amp\RGam[\m]{(X\lotimes_RY)}&=\sup\RGam[\m]{(X\lotimes_RY)}-\inf\RGam[\m]{(X\lotimes_RY)}\\
&=-\depth_R(X\lotimes_RY)-\inf\RGam[\m]{(X\lotimes_RY)}\\
&=-\depth_RX-\depth_RY+\depth R-\inf\RGam[\m]{(X\lotimes_RY)}\\
&\geq-\depth_R X-\depth_R Y+\depth R+\ld_R Y-\depth R+\depth_RX\\
&=\ld_R Y-\depth_R Y\\
&=\amp\RGam[\m]{Y}.
\end{align*}
Where the second equality follows from \Cref{lem:Idepth}, the third from \cite[Lemma 2.5]{yang}, the inequality from \Cref{thm:foxby2}, and the last equality from \cite[Proposition 2.8 and Proposition 3.3]{ShaulCM}.
\end{proof}
It is known that over commutative noetherian local rings the existence of a finite length module of finite projective dimension forces the ring to be Cohen-Macaulay. This result was generalized to DG-rings by Yang, see \cite[Corollary 4.4]{yang}. The final result of this section generalizes Yang's result by replacing the finite projective dimension hypothesis with a finite flat dimension hypothesis. We recall that $R$ is said to be \emph{Cohen-Macaulay} if $\amp R=\amp\RGam[\m]{R}$, see \cite[Definition 4.2]{ShaulCM}.

\begin{corollary}\label{cor:fl}
The following are equivalent
\begin{enumerate}
\item $R$ is Cohen-Macaulay.
\item There exists a derived $\m$-torsion DG-module $X\in\Db[R]$ with $\flatdim_RX<\infty$ and $\amp X=\amp R$.
\end{enumerate}
\end{corollary}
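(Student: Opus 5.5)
(1)⇒(2): I take $X=\RGam[\m]{K}$ for a suitable Koszul object, or more simply $X=K(R;x_1,\ldots,x_n)$ with $x_1,\ldots,x_n$ lifts of a system of parameters of $\HR$. Since $R$ is Cohen-Macaulay, I would use the DG version of the fact that the Koszul complex on a system of parameters over a Cohen-Macaulay ring has homology concentrated in the bottom degrees. Concretely, $K=K(R;\mathbf{x})$ is derived $\m$-torsion: its homology is $\m$-torsion, being annihilated by a power of $(\mathbf{x})$ which is $\m$-primary, and it is bounded, so \cite[Proposition 2.9]{ShaulCompletion} applies. Also $\flatdim_R K\le n<\infty$, since $K$ is built from $R$ by $n$ cones. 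It then remains to check $\amp K=\amp R$. Here $\inf K=0$ and $\H_0(K)=\HR/(\mathbf{x})\neq0$. By \cite[Proposition 3.17]{ShaulKoszul}, $\sup K=n-\depth_R(\mathbf{x},R)=n-\depth R$, since $(\mathbf{x})$ is $\m$-primary. For Cohen-Macaulay $R$ one has $\depth R=-\sup\RGam[\m]{R}$ and $\ld R=-\inf\RGam[\m]{R}$ \cite{ShaulCM}, with $\ld R=\dim\HR=n$ (as $\inf R=0$ after shifting). The Cohen-Macaulay condition $\amp\RGam[\m]{R}=\amp R$ then gives $n-\depth R=\amp R$, hence $\amp K=\amp R$. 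If this Koszul computation turns out to be delicate, the fallback is to cite Yang's construction \cite[Corollary 4.4]{yang}, since finite projective dimension implies finite flat dimension.

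(2)⇒(1): This is the main direction, and I would apply \Cref{cor:ampRGam} with $Y=R$. I should assume $X\not\simeq0$, since the amplitude hypothesis is meant for nonzero $X$. Since $X$ is derived $\m$-torsion and nonzero, $\depth_RX=-\sup X<\infty$ by \Cref{lem:Idepth}. Taking $Y=R\in\Dfb[R]$, \Cref{cor:ampRGam} gives
\[
\amp R=\amp X=\amp\RGam[\m]{X}=\amp\RGam[\m]{(X\lotimes_RR)}\geq\amp\RGam[\m]{R}.
\]
For the reverse inequality I use the general fact $\amp\RGam[\m]{R}\geq\amp R$ \cite[Theorem 4.1 / amplitude inequality]{ShaulCM}. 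Combining the two yields $\amp R=\amp\RGam[\m]{R}$, so $R$ is Cohen-Macaulay.

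I expect the main obstacle to be (1)⇒(2), specifically verifying $\sup K=\amp R$. This requires correctly normalizing $\inf R=0$ and matching $\ld R$ with $\dim\HR$. I would first try the Koszul/depth identity above, and otherwise reduce to Yang's existence result.
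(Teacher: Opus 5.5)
Your proposal is correct. Direction (2)$\Rightarrow$(1) is exactly the paper's argument: apply \Cref{cor:ampRGam} with $Y=R$, then combine with \cite[Theorem 4.1]{ShaulCM}. You also make explicit the hypothesis $\depth_RX=-\sup X<\infty$ via \Cref{lem:Idepth}, which the paper leaves implicit.

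For (1)$\Rightarrow$(2) you use a different witness. The paper takes $X=\RGam[\m]{R}$, for which everything is immediate: it is derived $\m$-torsion, bounded and of finite flat dimension, and $\amp\RGam[\m]{R}=\amp R$ is literally the definition of Cohen-Macaulay. Your Koszul complex $K=K(R;\mathbf{x})$ on a lifted system of parameters needs an actual computation: $\inf K=0$, $\sup K=n-\depth R$ by \cite[Proposition 3.17]{ShaulKoszul}, and then
\[
n-\depth R=\ld R+\sup\RGam[\m]{R}=\amp\RGam[\m]{R}.
\]
Each step checks out.

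Your witness buys something extra. $K$ is compact with finite-length homology, so it has finite projective dimension and lies in $\Dfb[R]$. Hence your argument shows that Cohen-Macaulayness already forces the stronger existence statement of \cite[Corollary 4.4]{yang}. Moreover, your computation does not use the Cohen-Macaulay hypothesis until the last line: it gives $\amp K=\amp\RGam[\m]{R}$ for every $R$, so Cohen-Macaulayness is equivalent to $\amp K(R;\mathbf{x})=\amp R$.

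The paper's choice trades this extra information for a one-line proof.
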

\begin{proof}
If $R$ is Cohen-Macaulay, then $X=\RGam[\m]{R}$ satisfies the required properties. Now assume that such an $X$ exists. It follows from \Cref{cor:ampRGam} that $\amp R=\amp X\geq\amp\RGam[\m]{R}$, while by \cite[Theorem 4.1]{ShaulCM} one has $\amp R\leq\amp\RGam[\m]{R}$. Therefore $\amp R=\amp\RGam[\m]{R}$, i.e. $R$ is Cohen-Macaulay.
\end{proof}

Next we show that the inequality in \Cref{thm:foxby2} is stronger than the one provided in \cite[Theorem 3.1]{yang}, but first we need some preliminary results. In particular, we need a version of \cite[Proposition 3.6]{ShaulFinDim} for flat dimension.

\begin{lemma}\label{lem:flatQuotient}
Let $M\in \Db[R]$. There is an equality
\begin{equation*}
\flatdim_R M = \sup\{n \in \mathbb{Z} \, | \, \Tor^R_n(\HR/I, M) \neq 0 \text{ for some ideal } I \subseteq \HR \}.
\end{equation*}
\end{lemma}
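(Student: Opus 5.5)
Write $s$ for the supremum on the right-hand side. The plan is to reduce to the ring $\HR$, where the classical statement is known, mirroring the way \cite[Proposition 3.6]{ShaulFinDim} handles projective dimension.

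\textbf{The inequality $s\leq\flatdim_R M$.} If $\flatdim_R M=n<\infty$, then by the definition of flat dimension $\Tor^R_i(N,M)=0$ for every $N\in\Db[R]$ and every $i>n+\sup N$. Take $N=\HR/I$. Then $\sup N=0$ when $I\neq\HR$, and $N=0$ when $I=\HR$. Hence $\Tor^R_i(\HR/I,M)=0$ for all $i>n$, which gives $s\leq n$. If $\flatdim_R M=\infty$ there is nothing to prove.

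\textbf{The inequality $\flatdim_R M\leq s$.} We may assume $s<\infty$. Consider $M':=\HR\lotimes_R M\in\D(\HR)$. For an $\HR$-module $N$, tensor cancellation gives
\[
N\lotimes_{\HR}M'\simeq N\lotimes_R M .
\]
Applying this with $N=\HR/I$ yields $\Tor^{\HR}_n(\HR/I,M')\cong\Tor^R_n(\HR/I,M)$. Since $\HR$ is a noetherian commutative ring, the classical characterization of flat dimension of complexes by cyclic modules applies (a \cite{LarsBook} result). It gives $\flatdim_{\HR}M'=\sup\{n\mid \Tor^{\HR}_n(\HR/I,M')\neq0\}=s$; in particular $M'\in\Db[\HR]$ has finite flat dimension. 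Finally, \cite[Corollary 2.3]{ShaulFinDim} gives $\flatdim_R M=\flatdim_{\HR}(\HR\lotimes_R M)$, which is the equality already used in \Cref{cor:flat>}.

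\textbf{Main obstacle.} The delicate point is applying \cite[Corollary 2.3]{ShaulFinDim} when we do not know a priori that $\flatdim_R M<\infty$. If that result requires finiteness of $\flatdim_R M$, we instead prove directly that $\flatdim_{\HR}M'<\infty$ forces $\flatdim_R M<\infty$. The intended argument is as follows.

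1. Let $N\in\Db[R]$. Every homology module $\H_j(N)$ is an $\HR$-module.
2. Using the truncation triangles of $N$, we reduce the vanishing of $\Tor^R_i(N,M)$ to the vanishing of $\Tor_i^R(\H_j(N),M)$ in the shifted degrees.
3. For each $j$, tensor cancellation gives $\H_j(N)\lotimes_R M\simeq \H_j(N)\lotimes_{\HR}M'$.
4. This complex has $\sup\leq s$, because $\flatdim_{\HR}M'=s$.
5. Gluing along the truncations gives $\sup(N\lotimes_R M)\leq s+\sup N$. Hence $\flatdim_R M\leq s$ directly from the definition.

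This dévissage also makes the appeal to \cite[Corollary 2.3]{ShaulFinDim} unnecessary.
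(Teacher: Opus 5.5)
Your proof is correct and takes essentially the same route as the paper. The paper combines \cite[Corollary 2.3(ii)]{ShaulFinDim}, i.e.\ $\flatdim_R M=\flatdim_{\HR}(\HR\lotimes_R M)$, with the cyclic-module characterization of flat dimension over $\HR$ from \cite[Theorem 8.3.11]{LarsBook}, and then translates back by tensor cancellation. The paper invokes that corollary with no finiteness assumption on $\flatdim_R M$, so your direct proof of $s\leq\flatdim_R M$ and the d\'evissage fallback are valid but not needed. The d\'evissage tacitly uses the standard fact that a DG $R$-module with homology concentrated in degree $j$ is isomorphic in $\D(R)$ to $\H_j(N)[j]$, regarded over $R$ via $R\to\HR$.
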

\begin{proof}
By \cite[Corollary 2.3(ii)]{ShaulFinDim}, we have that $\flatdim_R M = \flatdim_{\HR}\HR \lotimes_R M$. By \cite[Theorem 8.3.11]{LarsBook}, we have that
\begin{equation*}
\flatdim_{\HR}\HR \lotimes_R M =  \sup\{n \in \mathbb{Z} \, | \, \Tor^{\HR}_n(\HR/I, \HR \lotimes_R M) \neq 0 \text{ for some ideal } I \subseteq \HR\}.
\end{equation*}
The equality then follows by derived tensor cancellation.
\end{proof}

\begin{proposition}\label{prp:injdim}
Let $Y\in\Dba[R]$ with $\injdim_RY<\infty$ and $X\in\D(R)$, then
\[
\flatdim_R\RHom_R(X,Y)\leq\sup Y+\injdim X
\]
\end{proposition}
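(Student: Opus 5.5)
We will compute the flat dimension through \Cref{lem:flatQuotient}. It says that $\flatdim_R\RHom_R(X,Y)$ is the largest $n$ with $\Tor^R_n(\HR/I,\RHom_R(X,Y))\neq0$ for some ideal $I\subseteq\HR$. So it suffices to prove
\[
\sup\bigl(\HR/I\lotimes_R\RHom_R(X,Y)\bigr)\leq \sup Y+\injdim_R X
\]
for every ideal $I$ of $\HR$.

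First we dispose of the degenerate cases. If $\injdim_R X=\infty$ there is nothing to prove. If $\RHom_R(X,Y)\simeq0$ the flat dimension is $-\infty$ and the inequality is trivial. We therefore assume $\injdim_RX<\infty$; this is the hypothesis that makes the evaluation step below work.

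Fix $N=\HR/I\in\Dfb[R]$. The central step is to rewrite $N\lotimes_R\RHom_R(X,Y)$ as a derived Hom whose top degree is controlled by an injective dimension. We would invoke the DG version of Hom-evaluation (as in \cite[Lemma 2.7]{ShaulFinDim}) to produce a natural isomorphism of the shape
\[
N\lotimes_R\RHom_R(X,Y)\simeq \RHom_R\bigl(\RHom_R(N,X),Y\bigr).
\]
Here $N$ has finitely generated bounded homology, and a finite injective dimension hypothesis is what makes the evaluation morphism a quasi-isomorphism. The sup of the right-hand side would then be estimated in two stages:
\begin{enumerate}
\item bound $\sup$ of the outer Hom by the position of the homology of $Y$, namely $\sup Y$;
\item bound $-\inf\RHom_R(N,X)$ by the defining property of $\injdim_R X$, applied to $N$ with $\sup N=0$.
\end{enumerate}
The defining vanishing $\Ext_i^R(N,X)=0$ for $i>\injdim_RX+\sup N$ feeds directly into the second stage.

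Adding the two bounds should give $\sup(N\lotimes_R\RHom_R(X,Y))\leq\sup Y+\injdim_RX$ uniformly in $I$. \Cref{lem:flatQuotient} then yields the statement.

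The main obstacle is the evaluation isomorphism, and in particular which module must carry the finite injective dimension. The standard Hom-evaluation $N\lotimes_R\RHom_R(X,Y)\simeq\RHom_R(\RHom_R(N,X),Y)$ naturally requires finite injective dimension of $Y$, and its natural estimate produces $\sup X$ rather than $\sup Y$. To land exactly on the quantities $\sup Y$ and $\injdim_RX$ as worded, the roles of $X$ and $Y$ in the evaluation would have to be arranged so that the injective-dimension bound is applied to $X$.

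The first thing to try is to reduce to $\HR$ by base change. Using \cite[Corollary 2.3]{ShaulFinDim}, pass to $\RHom_R(X,Y)\lotimes_R\HR$, and check the evaluation isomorphism there with the classical results of \cite{LarsBook}, where the finiteness assumptions can be verified degreewise. If the evaluation only goes through with the injective-dimension hypothesis on $Y$, that would indicate the statement is intended with $X$ and $Y$ interchanged in the bound. The rest of the argument would then carry over verbatim.
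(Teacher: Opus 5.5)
Your core argument is the paper's proof. You reduce via \Cref{lem:flatQuotient} to bounding $\sup(\HR/I\lotimes_R\RHom_R(X,Y))$ and rewrite this by Hom evaluation \cite[Lemma 2.7]{ShaulFinDim} as $\RHom_R(\RHom_R(\HR/I,X),Y)$. You then estimate
\[
\sup\RHom_R(\RHom_R(\HR/I,X),Y)\leq\sup Y-\inf\RHom_R(\HR/I,X)\leq\sup Y+\injdim_R X .
\]
The first inequality is \cite[Lemma 3.2]{ShaulFinDim}. The second is \cite[Lemma 3.5]{ShaulFinDim}, which amounts to the defining vanishing for $\injdim_RX$ with $\sup \HR/I\leq 0$.

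The ``main obstacle'' you describe does not exist, and you should commit to the argument instead of hedging on the statement.

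\begin{itemize}
\item Hom evaluation is licensed by $\injdim_R Y<\infty$, which is exactly the given hypothesis. The reduction to $\injdim_RX<\infty$ is not what makes evaluation work; it only ensures the right-hand side is finite, since otherwise there is nothing to prove.
\item The displayed chain lands exactly on $\sup Y+\injdim_RX$. The outer Hom contributes $\sup Y$ and the inner Hom contributes $\injdim_RX$; $\sup X$ never appears.
\end{itemize}

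So the roles of $X$ and $Y$ are already as stated. No interchange is needed, and the fallback of base changing to $\HR$ is unnecessary.
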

\begin{proof}
Without loss of generality we can assume $\injdim_RX<\infty$. Consider $\HR/I$ for any ideal $I \subseteq \HR$. By Hom evaluation \cite[Lemma 2.7]{ShaulFinDim}, we have that
\[
\HR/I\lotimes_R\RHom_R(X,Y)\cong\RHom_R(\RHom_R(\HR/I,X),Y).
\]
By \cite[Lemma 3.2]{ShaulFinDim} one has the first inequality below, while the second follows from \cite[Lemma 3.5]{ShaulFinDim}
\begin{align*}
\sup\RHom_R(\RHom_R(\HR/I,X),Y)&\leq\sup Y-\inf\RHom_R(\HR/I,X)\\
&\leq\sup Y + \injdim X.
\end{align*}
By \Cref{lem:flatQuotient} this yields the desired inequality.
\end{proof}
We recall that $R$ is said to be \emph{Gorenstein} if $\injdim_RR < \infty$ (see \cite{GorensteinDGA},\cite{ShaulCM}). Note that by \cite[Proposition 4.5]{ShaulCM}, a Gorenstein DG-ring is Cohen-Macaulay.  
\begin{corollary}\label{cor:injdim}
Let $R$ be a Gorenstein DG-ring and $X\in\Db[R]$. Then,
\[
\injdim_RX<\infty\Longleftrightarrow\flatdim_RX<\infty.
\]
\end{corollary}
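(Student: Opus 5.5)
The forward implication will come from \Cref{prp:injdim} with $Y=R$, and the reverse implication from a dual statement obtained via Hom evaluation; in both cases the final step is reduction to $\HR$ through the characterization of finite dimensions by test modules $\HR/I$.

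For ($\Rightarrow$), suppose $\injdim_RX<\infty$. Since $R$ is Gorenstein, $\injdim_RR<\infty$ and $R\in\Dba[R]$. Applying \Cref{prp:injdim} with $Y=R$ gives
\[
\flatdim_R\RHom_R(X,R)\leq\sup R+\injdim_RX<\infty .
\]
To recover $X$, use that over a Gorenstein DG-ring $R$ is a dualizing DG-module (cf.\ \cite{ShaulCM}, \cite{GorensteinDGA}). Then $X\in\Db[R]$ of finite injective dimension lies in the Bass class of $R$. This should give $X\simeq R\lotimes_R\RHom_R(R,X)\simeq X$, which is trivial and useless. So I will argue directly with the criterion of \Cref{lem:flatQuotient} instead. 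For any ideal $I$, Hom evaluation \cite[Lemma 2.7]{ShaulFinDim} (valid since $\HR/I\in\Dfb$ and $\injdim_R R<\infty$) gives
\[
\HR/I\lotimes_R X\simeq \RHom_R(\RHom_R(\HR/I,R),X).
\]
Write $N_I=\RHom_R(\HR/I,R)$. This lies in $\Db[R]$, since $\injdim R<\infty$, and by \cite[Lemma 3.5]{ShaulFinDim} we have $-\inf N_I\le\injdim_RR$. Hence, by the definition of injective dimension (equivalently \cite[Lemma 3.2]{ShaulFinDim}),
\[
\sup \HR/I\lotimes_RX\leq \injdim_RX-\inf N_I\leq\injdim_RX+\injdim_RR.
\]
This bound is uniform in $I$, so \Cref{lem:flatQuotient} yields $\flatdim_RX<\infty$.

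For ($\Leftarrow$), suppose $\flatdim_RX<\infty$. Here I will use the injective analogue of \Cref{lem:flatQuotient}: $\injdim_R X$ is controlled by $-\inf\RHom_R(\HR/I,X)$ over ideals $I$, which is \cite[Proposition 3.6]{ShaulFinDim}-type. By tensor evaluation (\cite[Lemma 2.7(ii)]{ShaulFinDim}, as in \Cref{lem:dCompleteFree}), with $\HR/I\in\Dfb$ and $X$ of finite flat dimension,
\[
\RHom_R(\HR/I,X)\simeq\RHom_R(\HR/I,R\lotimes_RX)\simeq\RHom_R(\HR/I,R)\lotimes_RX=N_I\lotimes_RX .
\]
Then $\inf(N_I\lotimes_RX)\geq\inf N_I-\flatdim_RX\ge -\injdim_RR-\flatdim_RX$, using the definition of flat dimension in its $\inf$-form. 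This bound is uniform in $I$, giving $\injdim_RX<\infty$.

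The main obstacle is justifying the evaluation isomorphisms in the DG setting when $X$ is only known to have finite injective or finite flat dimension. The cited \cite[Lemma 2.7]{ShaulFinDim} must cover exactly these hypotheses; the tensor-evaluation version in particular may require $\projdim$ rather than $\flatdim$. If that fails, I would reduce to $\HR$ via \cite[Corollary 2.3]{ShaulFinDim}, but $\HR$ need not be Gorenstein, so the evaluation route is the one to make work.
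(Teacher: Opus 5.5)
Your overall route is the paper's, but you put $R$ in the wrong slot of \Cref{prp:injdim}, and two of your displayed inequalities are false as written.

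\textbf{The forward direction is one line.} Apply \Cref{prp:injdim} with $R$ as the first argument and $X$ (which has finite injective dimension) as the second. This gives directly
$\flatdim_R X=\flatdim_R\RHom_R(R,X)\le\sup X+\injdim_R R$,
which is the paper's whole proof of ($\Rightarrow$). Your ``direct'' argument just reruns the proof of \Cref{prp:injdim} in this special case. For ($\Leftarrow$), the paper quotes $\injdim_R X=\injdim_R(R\lotimes_R X)\le\injdim_R R-\inf X$ from \cite[Proposition 3.6]{ShaulFinDim}. That also spares it the tensor-evaluation issue you worry about. One minor point: the Hom evaluation $\HR/I\lotimes_R\RHom_R(R,X)\simeq\RHom_R(N_I,X)$ is licensed by $\injdim_R X<\infty$ (the target), not by $\injdim_R R<\infty$.

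\textbf{The two inequalities.} In each one you used a homological dimension of $X$ where a boundary of $X$ belongs, and both fail under shifts.

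Forward direction: $\sup(\HR/I\lotimes_R X)\le\injdim_R X-\inf N_I$ cannot hold in general. Replacing $X$ by $X[k]$ raises the left side by $k$ but lowers $\injdim_R X$ by $k$. Concretely, take $R$ artinian Gorenstein, $X=R[5]$ and $I=\m$. Then $N_{\m}\simeq\kk$ and $\injdim_R X=-5$, so the inequality reads $5\le -5$. The definition of injective dimension bounds $\inf\RHom_R(-,X)$, not $\sup$. What you need is the elementary bound $\sup\RHom_R(N_I,X)\le\sup X-\inf N_I\le\sup X+\injdim_R R$.

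Reverse direction: $\inf(N_I\lotimes_R X)\ge\inf N_I-\flatdim_R X$ fails for $X[k]$ with $k\ll 0$, since $\flatdim_R X[k]=\flatdim_R X+k$. Flat dimension bounds $\sup$ of derived tensor products, not $\inf$. The correct estimate is $\inf(N_I\lotimes_R X)\ge\inf N_I+\inf X\ge\inf X-\injdim_R R$.

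With these repairs both bounds are still uniform in $I$, so the argument goes through. The corrected bounds reproduce exactly the paper's estimates $\flatdim_R X\le\sup X+\injdim_R R$ and $\injdim_R X\le\injdim_R R-\inf X$.
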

\begin{proof}
If $\injdim_RX<\infty$, then by \Cref{prp:injdim}
\begin{align*}
\flatdim_R X&=\flatdim_R\RHom_R(R,X)\\
&\leq\sup X+\injdim R.
\end{align*}
If $\flatdim_RX<\infty$, then by \cite[Proposition 3.6]{ShaulFinDim}
\begin{align*}
\injdim_RX&=\injdim_R R\lotimes_RX\\
&\leq \injdim R-\inf X.\qedhere
\end{align*}
\end{proof}

\begin{example}
In this example we construct a DG-ring $R$ of arbitrarily large amplitude and a bounded complex $X$ of finite depth and finite flat dimension such that
\[
\ld_RX>-\inf\RGam[\m]{X},
\]
showing that the inequality in \Cref{thm:foxby2} is indeed stronger than the one provided in \cite[Theorem 3.1]{yang}.

Let $(A,\m)$ be a Gorenstein local noetherian ring of positive Krull dimension. Let $R$ be the trivial extension $A\ltimes A[n]$ for some positive integer $n$ as defined in \cite[Definition 1.2]{trivialExt}. By \cite[Theorem 2.2]{trivialExt} $R$ is a Gorenstein DG-ring. The homology of $R$ is
\[
\H_i(R)=\begin{cases} A\quad i=0,n\\0 \quad\;\mathrm{otherwise,}\end{cases}
\]
therefore $\amp R=n$. Let $\p\in\Spec A$, we denote by $E(R,\p)$ the indecomposable injective DG $R$-module such that $\H_0(E(R,\p))\cong E_A(A/\p)$, the injective hull of $A/\p$ over $A$, see \cite[7.2]{injective}. Let $\p$ be a minimal prime of $A$ and consider the complex $X=E(R,\p)\oplus E(R,\m)$. We note that by \cite[Corollary 4.12]{injective}
\begin{align*}
\H_i(X)&\cong\H_i(E(R,\p))\oplus\H_i(E(R,\m))\\
&\cong\Hom_A(\H_{-i}(R),E_A(A/\p))\oplus\Hom_A(\H_{-i}(R),E_A(A/\m))\\
&\cong\begin{cases} E_A(A/\p)\oplus E_A(A/\m)\quad i=0,-n\\
0\quad\;\mathrm{otherwise,}\end{cases}
\end{align*}
in particular $X$ is bounded. Moreover,
\begin{align*}
\depth_R X&=\inf\{\depth_R E(R,\p),\depth_R E(R,\m)\}\\
&=\inf\{-\sup\RGam[\m]{E(R,\p)},-\sup\RGam[\m]{E(R,\m)}\}\\
&=-\sup E(R,\m)\\
&=0,
\end{align*}
where the third equality is from \cite[Proposition 7.13]{injective} and the last one from \cite[Definition 3.1]{injective}.

We note that by \Cref{cor:injdim} $\flatdim_RX<\infty$.
Furthermore,
\begin{align*}
\ld_RX&=\sup_{l\in\mathbb{Z}}\{\dim_A\H_l(X)-l\}\\
&=\dim_A(E_A(A/\p)\oplus E_A(A/\m))+n\\
&=\dim A+n.
\end{align*}

Finally,
\begin{align*}
-\inf\RGam[\m]{X}&=-\inf(\RGam[\m]{E(R,\p)}\oplus\RGam[\m]{E(R,\m)})\\
&=-\inf E(R,\m)\\
&=n,
\end{align*}
where the second equality follows from \cite[Proposition 7.13]{injective}, and the third follows similarly as the computation of the homology of $X$. 
\end{example}

\bibliographystyle{amsplain}
\bibliography{biblio}
\end{document}